\newtheorem{lemma}{Lemma}[section]
\newtheorem{theorem}{Theorem}[section]
\newtheorem{definition}{Definition}[section]
\let\Section=\section
\def\section{\setcounter{equation}{0}\Section}
\begin{document}
 \title[On a semilinear elliptic systems in Hyperbolic space]
 { On a semilinear elliptic systems in Hyperbolic space}

\author{Haiyang He$^*$}
\thanks{*Department of Mathematics, Hunan Normal University, Changsha Hunan 410081, The People's Republic of China
(hehy917@yahoo.com.cn)}
\maketitle

\vskip 0.2cm \arraycolsep1.5pt
\newtheorem{Lemma}{Lemma}[section]
\newtheorem{Theorem}{Theorem}[section]
\newtheorem{Definition}{Definition}[section]
\newtheorem{Proposition}{Proposition}[section]
\newtheorem{Remark}{Remark}[section]
\newtheorem{Corollary}{Corollary}[section]

\begin{abstract} In this paper, we consider systems of semilinear elliptic equations
\begin{equation}\label{eq:0.1}
\left\{
  \begin{array}{ll}
  \displaystyle
-\Delta_{\mathbb{H}^{N}}u=|v|^{p-1}v      ,    \\
\\
\displaystyle
-\Delta_{\mathbb{H}^{N}}v=|u|^{q-1}u      ,    \\
\end{array}
\right.
\end{equation}
in the whole of Hyperbolic space $\mathbb{H}^{N}$. We establish decay estimates and symmetry properties of  positive solutions.
Unlike the corresponding problem in Euclidean space $\mathbb{R}^N$,  we prove that there exists a nonnegative nontrivial solution of problem (\ref{eq:0.1}).

\end{abstract}
\vspace{3mm}
  \noindent {\bf Key words}:  Hyperbolic symmetry, Decay estimate, Elliptic  systems, Hyperbolic space.  \\
\noindent {\bf AMS} classification:  58J05,  35J60. \vspace{3mm}

\section {Introduction and main result}
In this article, we will study decay, symmetry and existence of solutions of
the following semilinear elliptic systems
\begin{equation}\label{eq:1.1}
\left\{
  \begin{array}{ll}
  \displaystyle
-\Delta_{\mathbb{H}^{N}}u=|v|^{p-1}v      ,    \\
\displaystyle
-\Delta_{\mathbb{H}^{N}}v=|u|^{q-1}u      ,    \\
\end{array}
\right.
 \end{equation}
 on Hyperbolic space $\mathbb{H}^{N}$, where $\Delta_{\mathbb{H}^{N}}$ denotes the Laplace-Betrami operator on $\mathbb{H}^{N}, N\geq 3,p$ and $q$ satisfy a suitable condition.

When posed in the Euclidean space $\mathbb{R}^N$, problem  (\ref{eq:1.1}) has two features. First, it is the Emden-Fowler equation
\begin{equation}\label{eq:1.2'}
-\Delta u=|u|^{p-1}u   \ in \ \mathbb{R}^N    .
 \end{equation}
Such a problem has been extensively studied, see for instance\cite{CGS}\cite{CL} \cite{GNN}\cite{GS1}\cite{GS2}  and references therein. Attention was focused
 on the existence and Liouville-type theorem
for solutions of problem (\ref{eq:1.2'}). There is a host of later important contributions to the subject, among them
we must mention the famous paper by \cite{GS2} where the Liouville-type theorem of problem (\ref{eq:1.2'}) was obtained. They
proved that the only non-negative solution of (\ref{eq:1.2'}) is $u=0$ when
\[1\leq p<\frac{N+2}{N-2}, \ \quad \ N\geq 3.\]
Second, it is the Hamiltonian type system
\begin{equation}\label{eq:1.3'}
\left\{
  \begin{array}{ll}
  \displaystyle
-\Delta u=|v|^{p-1}v,   \ in \ \mathbb{R}^N,\\
\displaystyle
-\Delta v=|u|^{q-1}u,  \ in \ \mathbb{R}^N.  \\
\end{array}
\right.
 \end{equation}
 Using a blow up technique, JieQing \cite{J} and M.A.Souto \cite{S}has been established the priori estimates for solutions
 of problem (\ref{eq:1.3'}). In \cite{FF}, Figueidedo and Felmer  proved that if $p>0, q>0$ are such that
 \[p, q\leq \frac{N+2}{N-2}, \ but \ not\ both \ are \ equal \ to \ \frac{N+2}{N-2},\]
 then the only non-negative solution of (\ref{eq:1.3'}) is the trivial one $u=0, v=0$. For more general nonlinear
 elliptic equations in the Euclidean space $\mathbb{R}^N$, we refer to \cite{BM}\cite{BS}\cite{M1}\cite{M2}\cite{SZ} and reference therein.

 It is also of interest to study problem (\ref{eq:1.2'}) and (\ref{eq:1.3'}) with respect to different ambient geometries
 in particular to see how curvature properties  affect the existence and nature of solutions. A recent paper by Mancini and Sandeep\cite{MS}
 have studied the existence / nonexistence and uniqueness of positive solution of the following elliptic equation
 \begin{equation}\label{eq:1.4'}
  \begin{array}{ll}
  \displaystyle
-\Delta_{\mathbb{H}^{N}}u=|u|^{p-1}u+\lambda u
\end{array}
 \end{equation}
 on Hyperbolic space $\mathbb{H}^{N}$. They proved that if $\lambda=0$ and $1<p<\frac{N+2}{N-2}$, then problem (\ref{eq:1.4'}) has a positive solution.
This result is contrasted with the result in Euclidean space due to \cite{GS2}.  Afterward, Bhakta
and Sandeep \cite{BS1} have investigated the priori estimates, existence of radial sign changing solutions of problem (\ref{eq:1.4'}).
In \cite{BGGV}, classification of radial solutions is done by Bonforte etc for problem (\ref{eq:1.4'}).

Our aim in this paper is to study the decay, symmetry  and existence of solution of problem (\ref{eq:1.1}). Our result should be
contrasted with a result due to Figueidedo and Felmer\cite{FF}. The difficulties in treating system (\ref{eq:1.1}) originate in at least
three facts. First, there is a lack of compactness due to the fact that we are working in $\mathbb{H}^{N}$ which is a noncompact manifold.
Second, due to the type of growth of the nonlinear term, we can not work with the usual space $H^1(\mathbb{H}^{N})$ and then we need inhomogeneous
Sobolev space. Third, although, we have a variational problem, the functional associated to it is strongly indefinite.

Now we are ready to state our main result. In section 2, we discuss the symmetry property of positive solution of problem (\ref{eq:1.1}).
\begin{theorem}\label{tm:1.1}
For $p,q$ satisfying
\begin{equation}\label{eq:1.2}
p,q \leq \frac{N+2}{N-2}
\end{equation}
Then, all positive solutions $(u,v)\in H^{1}(\mathbb{H}^{N})\times H^{1}(\mathbb{H}^{N}) $ of problem (\ref{eq:1.1}) are Hyperbolic symmetry, i.e. there is $x_{0}\in \mathbb{H}^{N}$, such that $(u,v)$ is constant on hyperbolic spheres centered at $x_{0}$.
\end{theorem}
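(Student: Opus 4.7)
My plan is to adapt the classical moving plane method of Gidas–Ni–Nirenberg to the hyperbolic setting, using the invariance of system (\ref{eq:1.1}) under the full isometry group of $\mathbb{H}^N$. I would work in a conformal model of $\mathbb{H}^N$ (for concreteness the ball model $B^N$ with metric $g = \bigl(\tfrac{2}{1-|y|^2}\bigr)^{2} |dy|^{2}$, or equivalently the upper half-space model) in which the totally geodesic hyperplanes become Euclidean spheres orthogonal to $\partial B^N$ together with diametral hyperplanes, and the hyperbolic reflections through them are Möbius inversions. Since these reflections are isometries of $\mathbb{H}^N$, the pair $(u_\lambda,v_\lambda)$ obtained by reflecting $(u,v)$ through such a hypersurface still solves (\ref{eq:1.1}).

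First I would fix a one-parameter family of totally geodesic hyperplanes $T_\lambda$ foliating $\mathbb{H}^N$, with associated reflections $R_\lambda$ and half-spaces $\Sigma_\lambda$; set $w_\lambda = u_\lambda - u$ and $z_\lambda = v_\lambda - v$ on $\Sigma_\lambda$. From (\ref{eq:1.1}) together with the mean-value theorem, the pair $(w_\lambda,z_\lambda)$ satisfies a linear cooperative Hamiltonian system
\begin{equation*}
-\Delta_{\mathbb{H}^N} w_\lambda = c_\lambda(x)\, z_\lambda, \qquad -\Delta_{\mathbb{H}^N} z_\lambda = d_\lambda(x)\, w_\lambda,
\end{equation*}
with $c_\lambda,d_\lambda \geq 0$, together with the boundary condition $w_\lambda = z_\lambda = 0$ on $T_\lambda$. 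Using the decay estimates for $(u,v)$ that should be established earlier in the paper (and which follow from $(u,v)\in H^1(\mathbb{H}^N)\times H^1(\mathbb{H}^N)$ and standard bootstrap for (\ref{eq:1.1})), I can start the procedure from ``infinity'' along the foliation, where $w_\lambda,z_\lambda \geq 0$ trivially because $(u,v)$ tends to zero; this replaces the usual Kelvin transform step in $\mathbb{R}^N$.

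Next I would decrease $\lambda$ and invoke a maximum principle on narrow hyperbolic half-spaces for cooperative systems (in the spirit of Busca–Sirakov and Figueiredo–Felmer, transcribed into $\mathbb{H}^N$ — the needed ingredient is that $-\Delta_{\mathbb{H}^N}$ satisfies the same small-volume comparison estimate as $-\Delta$) to propagate $w_\lambda,z_\lambda \geq 0$. Sliding $T_\lambda$ to the first critical position $\lambda_0$ and applying the strong maximum principle for cooperative systems gives either $w_{\lambda_0}\equiv z_{\lambda_0}\equiv 0$, i.e. symmetry of $(u,v)$ with respect to $T_{\lambda_0}$, or else allows one to push $\lambda$ further, contradicting the definition of $\lambda_0$. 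Since the initial foliation direction is arbitrary, $(u,v)$ is symmetric with respect to an $(N{-}1)$-parameter family of totally geodesic hyperplanes; the intersection of this family must reduce to a single point $x_0\in\mathbb{H}^N$, and invariance under every reflection fixing $x_0$ forces $(u,v)$ to be constant on hyperbolic spheres around $x_0$, proving the theorem.

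The main obstacle I anticipate is the initial step: in $\mathbb{R}^N$ one exploits a Kelvin transform to produce decay and start the plane at infinity, whereas here one must substitute a purely hyperbolic argument based on the $L^\infty$-decay of $H^1$ solutions of (\ref{eq:1.1}) along every hyperbolic ray, uniformly under the family of reflections. A secondary difficulty is establishing the cooperative maximum principle on narrow hyperbolic half-spaces simultaneously for both components when $p$ and $q$ are possibly different and close to the critical exponent $\tfrac{N+2}{N-2}$; this requires balancing the weights $c_\lambda,d_\lambda$ in a suitable weighted norm, in the spirit of the Figueiredo–Felmer argument used for (\ref{eq:1.3'}), but transported to the hyperbolic Laplacian.
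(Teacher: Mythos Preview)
Your overall strategy---moving planes in $\mathbb{H}^N$ along a one-parameter foliation by totally geodesic hyperplanes, using that the reflections are isometries so $(u_\lambda,v_\lambda)$ solves the same system---is exactly the paper's strategy, following the Almeida--Damascelli--Ge framework \cite{ADG}. The difference is in the technical implementation of the comparison step. You propose a \emph{pointwise} narrow-region maximum principle for the linearised cooperative system, which requires $L^\infty$ decay of $(u,v)$ (to control the coefficients $c_\lambda\sim p\,v^{p-1}$, $d_\lambda\sim q\,u^{q-1}$) before the procedure can start. The paper instead uses the \emph{integral} form of moving planes: it takes $(u_\lambda-u)^+$ and $(v_\lambda-v)^+$ as test functions, applies H\"older and the Sobolev inequality on $\mathbb{H}^N$, and obtains
\[
\Bigl[1-c\Bigl(\int_{Q_\lambda}|u_\lambda|^{q+1}\Bigr)^{\frac{q-1}{q+1}}\Bigl(\int_{Q_\lambda}|v_\lambda|^{p+1}\Bigr)^{\frac{p-1}{p+1}}\Bigr]\Bigl(\int_{Q_\lambda}|\nabla_{\mathbb{H}^N}(u_\lambda-u)^+|^2\Bigr)^{1/2}\le 0,
\]
so that the smallness needed to start and to continue past the critical position is purely the smallness of $\int_{Q^\lambda}|u|^{q+1}$ and $\int_{Q^\lambda}|v|^{p+1}$, which follows directly from $(u,v)\in H^1\times H^1$ via Sobolev embedding. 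In particular, no decay estimate is used; this matters because in the paper the decay result (Theorem~\ref{tm:1.2}) is proved \emph{after} the symmetry result and only for radial solutions, so your phrase ``decay estimates \dots\ established earlier in the paper'' reverses the actual logical order. Your approach would still go through once you supply an independent $L^\infty$ decay via Moser iteration, but the paper's integral method is more economical and explains why the hypothesis is merely $(u,v)\in H^1\times H^1$ with $p,q\le\frac{N+2}{N-2}$.
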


In section 3, we prove a result on the decay of solutions of problem (\ref{eq:1.1}) as $|x|\rightarrow\infty$ and in fact, exponential decay. The results of
Section 3 is the following.
\begin{theorem}\label{tm:1.2}
For $p,q$ fulfilling (\ref{eq:1.2}), let $(u,v)\in H^{1}(\mathbb{H}^{N})\times H^{1}(\mathbb{H}^{N})$ be a positive radial solution of (\ref{eq:1.1}),
then $u'(r)<0, v'(r)<0$ for $r>0$ and
\[\lim\limits_{r\rightarrow \infty}u(r)=\lim\limits_{r\rightarrow \infty}v(r)=\lim\limits_{r\rightarrow \infty}u'(r)=\lim\limits_{r\rightarrow \infty}v'(r)=0.\]
Moreover, 
\[\lim\limits_{r\rightarrow \infty}\frac{\log u^{2}}{r}=\lim\limits_{r\rightarrow \infty}\frac{\log v^{2}}{r}=\lim\limits_{r\rightarrow \infty}\frac{\log u'^{2}}{r}=\lim\limits_{r\rightarrow \infty}\frac{\log v'^{2}}{r}=-2(N-1).\]
\end{theorem}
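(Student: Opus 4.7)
In geodesic polar coordinates centred at any fixed pole, a radial solution of (\ref{eq:1.1}) satisfies the divergence-form ODEs $(\sinh^{N-1}(r)u')'=-\sinh^{N-1}(r)v^p$ and $(\sinh^{N-1}(r)v')'=-\sinh^{N-1}(r)u^q$, with $u'(0)=v'(0)=0$ by smoothness at the pole. Integrating each from $0$ to $r$ and using positivity of $u,v$ yields $u'(r),v'(r)<0$ for every $r>0$. Since $u>0$ is then monotonically decreasing, belongs to $L^{2}(\mathbb{H}^{N})$, and $\int^{\infty}\sinh^{N-1}(r)\,dr=+\infty$, the function $u$ must vanish at infinity; symmetrically $v(r)\to 0$. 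Moreover the integrated identity
\begin{equation*}
u'(r)=-\sinh^{-(N-1)}(r)\int_{0}^{r}\sinh^{N-1}(s)\,v^{p}(s)\,ds
\end{equation*}
(together with the analogous one for $v$) shows that $u'$ is bounded on $[1,\infty)$, hence $u''$ is bounded by the equation and $u'$ is uniformly continuous; since $\int_{1}^{\infty}(-u'(s))\,ds=u(1)<\infty$, Barbalat's lemma gives $u'(r)\to 0$, and similarly $v'(r)\to 0$.

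For the sharp exponential rate I would bootstrap via a barrier argument. For every $\alpha\in(0,N-1)$, the function $\bar u(r)=Ce^{-\alpha r}$ satisfies
\begin{equation*}
-\Delta_{\mathbb{H}^{N}}\bar u=\alpha\bigl[(N-1)\coth(r)-\alpha\bigr]\bar u\geq c_{\alpha}\bar u\qquad\text{for }r\text{ large.}
\end{equation*}
Alternating comparison between $-\Delta u=v^{p}$ and $-\Delta v=u^{q}$ (using $pq>1$) upgrades the bare decay $u,v\to 0$ into $u(r)+v(r)\leq Ce^{-\alpha r}$ for every $\alpha<N-1$. Choosing $\alpha$ close enough to $N-1$ so that $\sinh^{N-1}\,v^{p}\in L^{1}(0,\infty)$, the identity above gives $u'(r)\sim -2^{N-1}L\,e^{-(N-1)r}$ with $L=\int_{0}^{\infty}\sinh^{N-1}(s)v^{p}(s)\,ds\in(0,\infty)$, and integration over $[r,\infty)$ yields
\begin{equation*}
u(r)\sim\frac{2^{N-1}L}{N-1}\,e^{-(N-1)r}.
\end{equation*}
The symmetric calculation for $v$ then gives the four claimed limits equal to $-2(N-1)$.

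The main obstacle is this bootstrap step. The critical exponent $\alpha=N-1$ saturates the barrier, since $e^{-(N-1)r}$ is asymptotically a homogeneous solution of $-\Delta_{\mathbb{H}^{N}}$; one cannot use it directly as a strict super-solution and must instead recover the sharp rate only indirectly, by first chaining arbitrary sub-critical rates through the coupling of the two equations and then invoking the explicit integrated first-order identity for $u'(r)$. Verifying that this iteration converges to the threshold $N-1$ rather than stalling at some smaller value — and verifying the integrability of $\sinh^{N-1}v^{p}$ at the closing step — is where the bulk of the technical work resides.
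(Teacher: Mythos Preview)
Your argument for $u',v'<0$ and for the vanishing of $u,v,u',v'$ at infinity is correct and close in spirit to the paper's (the paper uses the monotone energy $J(t)=u'v'+\tfrac{v^{p+1}}{p+1}+\tfrac{u^{q+1}}{q+1}$ together with $\liminf_{t\to\infty} k(t)[u'^2+u^2]=0$ from the $H^1$ hypothesis, rather than Barbalat, but the difference is cosmetic).

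For the exponential rate, however, the paper takes a markedly shorter path that sidesteps the bootstrap you identify as the main obstacle. The key observation is that once $u,v\to 0$, for every $\varepsilon>0$ one eventually has $v^{p}\le\varepsilon v$, $u^{q}\le\varepsilon u$ and $\coth r\le 1+\varepsilon$; thus the sum $w=u+v$ satisfies the pair of \emph{linear constant-coefficient} differential inequalities
\[
w''+(N-1)(1+\varepsilon)w'\;\le\;0\;\le\;w''+(N-1)w'+\varepsilon w,
\]
whose characteristic roots are $-(N-1)(1+\varepsilon),\,0$ on the left and $\tfrac12\bigl(-(N-1)\pm\sqrt{(N-1)^2-4\varepsilon}\bigr)$ on the right. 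Standard ODE comparison then gives matching exponential upper and lower bounds for $w$ and $w'$ in a single step; letting $\varepsilon\to 0$ pins the rate at $-(N-1)$. The individual rates for $u,v,u',v'$ follow from $0<u\le u+v$, $|u'|\le |u'+v'|$ (upper bounds) together with the one-sided inequality $u''+(N-1)(1+\varepsilon)u'\le 0$, which yields $|u'(t)|\ge c\,e^{-(N-1)(1+\varepsilon)t}$ and, upon integration, $u(t)\ge c'\,e^{-(N-1)(1+\varepsilon)t}$ (lower bounds). No iteration through the coupling is needed.

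Your route via barriers and the integrated first-order identity is also viable and in fact delivers more (the actual asymptotic constant $L=\int_0^\infty\sinh^{N-1}(s)v^p(s)\,ds$), but it requires a seed decay rate to launch the bootstrap---something like a Strauss-type bound $u(r)\lesssim e^{-(N-1)r/2}$ for radial $H^1$ functions---which you do not spell out. The paper's trick of replacing $v^p$ by $\varepsilon v$ and working with $u+v$ linearizes the problem at once and makes the iteration unnecessary.
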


In section 4, we prove the existence of radial symmetric solutions of problem (\ref{eq:1.1}). Namely the following theorem.
\begin{theorem}\label{tm:1.3}
Suppose that $(p,q)$ satisfying
\[\frac{1}{p+1}+\frac{1}{q+1}>\frac{N-2}{N},\]
then problem (\ref{eq:1.1}) has at least one radial solution $(u,v)$.
\end{theorem}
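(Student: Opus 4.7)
My plan is to obtain a radial solution of (\ref{eq:1.1}) variationally, by working in a pair of fractional Sobolev spaces on $\mathbb{H}^{N}$ in the spirit of Hulshof--Van der Vorst and then applying the infinite-dimensional linking theorem of Benci--Rabinowitz. The assumption $\frac{1}{p+1}+\frac{1}{q+1}>\frac{N-2}{N}$ is precisely the statement that $(p,q)$ lies strictly below the critical hyperbola, which is what makes this scheme work.

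First I would pick $s,t\in(0,2)$ with $s+t=2$ and $q+1<\frac{2N}{N-2s}$, $p+1<\frac{2N}{N-2t}$. A direct computation shows that this amounts to $s>\frac{N(q-1)}{2(q+1)}$ and $t>\frac{N(p-1)}{2(p+1)}$, and a compatible pair exists if and only if $\frac{1}{p+1}+\frac{1}{q+1}>\frac{N-2}{N}$. Using the spectral calculus for $-\Delta_{\mathbb{H}^{N}}$ on $L^{2}(\mathbb{H}^{N})$, define the fractional Hilbert spaces $E^{s}=\mathcal{D}((-\Delta_{\mathbb{H}^{N}})^{s/2})$ and $E^{t}$ analogously, with their radial subspaces $E^{s}_{r},E^{t}_{r}$. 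On the radial subspaces the embeddings into the relevant subcritical Lebesgue spaces are compact: for $H^{1}_{r}(\mathbb{H}^{N})$ this is the hyperbolic analogue of the Strauss lemma, and for fractional exponents it follows by real interpolation.

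Consider the functional
\[
I(u,v)=\int_{\mathbb{H}^{N}}(-\Delta_{\mathbb{H}^{N}})^{s/2}u\,(-\Delta_{\mathbb{H}^{N}})^{t/2}v\,dV-\frac{1}{p+1}\int_{\mathbb{H}^{N}}|v|^{p+1}\,dV-\frac{1}{q+1}\int_{\mathbb{H}^{N}}|u|^{q+1}\,dV
\]
on $E^{s}_{r}\times E^{t}_{r}$, whose critical points are weak solutions of (\ref{eq:1.1}). After the isometric identification $E^{s}_{r}\simeq E^{t}_{r}$ via $(-\Delta_{\mathbb{H}^{N}})^{(s-t)/2}$ and a $\frac{1}{\sqrt{2}}$-rotation, the quadratic part becomes $\frac{1}{2}(\|\xi\|^{2}-\|\eta\|^{2})$, which yields a splitting $E=E_{+}\oplus E_{-}$ into two infinite-dimensional subspaces on which the bilinear form is positive- and negative-definite. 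I would then verify the linking geometry (strictly positive on a small sphere in $E_{+}$, bounded above on the standard cylinder built from $E_{-}$ and one fixed positive direction), the superquadratic growth being automatic from $p,q>1$, and apply the Benci--Rabinowitz linking theorem to produce a nontrivial critical point. Positivity is recovered afterwards by replacing $|v|^{p-1}v$ and $|u|^{q-1}u$ by $v_{+}^{p}$ and $u_{+}^{q}$ and invoking the maximum principle for $-\Delta_{\mathbb{H}^{N}}$.

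The principal technical obstacle will be the Palais--Smale condition for the strongly indefinite $I$. Boundedness of a (PS) sequence requires estimating $\|u\|_{E^{s}_{r}}$ and $\|v\|_{E^{t}_{r}}$ simultaneously by testing against $u$ and $v$ in the two equations, and then closing the loop using the two strict subcritical inequalities in tandem; the slack between the assumed inequality and $\frac{N-2}{N}$ is exactly what is consumed in this step. Once boundedness is secured, the compact radial embeddings give $L^{p+1}$- and $L^{q+1}$-strong convergence of a subsequence of the nonlinear terms, and one concludes that the weak limit is a nontrivial radial critical point of $I$, hence a radial solution of (\ref{eq:1.1}).
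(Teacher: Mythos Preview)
Your proposal is correct and follows essentially the same route as the paper: choose $s,t$ with $s+t=2$ so that $q+1<\frac{2N}{N-2s}$ and $p+1<\frac{2N}{N-2t}$ (possible precisely under the subcritical-hyperbola assumption), work on $E=H^{s}_{r}(\mathbb{H}^{N})\times H^{t}_{r}(\mathbb{H}^{N})$, use the compact radial embedding (the paper's Lemma~\ref{lm:4.1}, proved by interpolation from the $H^{1}_{r}$ case), split $E=E^{+}\oplus E^{-}$ via $(u,\pm A^{-t}A^{s}u)$, and apply an indefinite linking theorem to the strongly indefinite functional. The paper's own proof is in fact just a one-line reference to \cite{FJ} after establishing the compact embedding, so your outline is, if anything, more detailed than what appears in Section~4; the only cosmetic difference is that the paper builds the positive-part truncation $(u_{+})^{q},(v_{+})^{p}$ into the functional from the outset rather than recovering positivity a posteriori.
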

Finally in section 5, we prove the existence of a ground state solution for the system (\ref{eq:1.1}).
\begin{theorem}\label{tm:1.4}
Suppose that $(p,q)$ satisfying (\ref{eq:1.2}),
then problem (\ref{eq:1.1}) has a nonnegative nontrivial ground state solutions.
\end{theorem}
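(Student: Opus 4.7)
The plan is to use the dual variational principle of Cl\'ement--de Figueiredo--Mitidieri to convert the strongly indefinite energy
$$I(u,v)=\int_{\mathbb{H}^N}\nabla u\cdot\nabla v\,dV_g-\frac{1}{p+1}\int_{\mathbb{H}^N}|v|^{p+1}\,dV_g-\frac{1}{q+1}\int_{\mathbb{H}^N}|u|^{q+1}\,dV_g$$
into a functional amenable to mountain-pass. Setting $f=|v|^{p-1}v$ and $g=|u|^{q-1}u$, the Legendre transforms of the potentials are $F^{*}(f)=\frac{p}{p+1}|f|^{(p+1)/p}$ and $G^{*}(g)=\frac{q}{q+1}|g|^{(q+1)/q}$. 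Since the bottom of the $L^{2}$-spectrum of $-\Delta_{\mathbb{H}^{N}}$ is the positive number $(N-1)^{2}/4$, its inverse $T=(-\Delta_{\mathbb{H}^{N}})^{-1}$ is a bounded, positivity-preserving operator between the appropriate $L^{r}$ scales. I would then consider the dual functional
$$\Phi(f,g)=\int_{\mathbb{H}^N}\bigl(F^{*}(f)+G^{*}(g)\bigr)\,dV_g-\int_{\mathbb{H}^N}f\,(Tg)\,dV_g$$
on $X=L^{(p+1)/p}(\mathbb{H}^{N})\times L^{(q+1)/q}(\mathbb{H}^{N})$. Under (\ref{eq:1.2}) the dual exponents put $T$ in the admissible mapping range, so $\Phi\in C^{1}(X)$, and critical points $(f,g)$ yield weak solutions of (\ref{eq:1.1}) via $(u,v)=(Tg,Tf)$.

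The next step is to verify the mountain-pass geometry for $\Phi$: one has $\Phi(0,0)=0$; $\Phi$ stays positive on a small sphere in $X$ because the two convex Legendre terms dominate the coupling term near the origin; and $\Phi\to-\infty$ along a ray where the coupling $\int fTg$ is positive and its scaling exponent exceeds those of the potential terms. A Palais--Smale sequence $(f_{n},g_{n})$ at the mountain-pass level $c>0$ is then bounded in $X$ by combining $\Phi(f_{n},g_{n})\to c$ with $\langle\Phi'(f_{n},g_{n}),(f_{n},g_{n})\rangle\to 0$ and the homogeneities of the three terms. Passing to a subsequence, $(f_{n},g_{n})\rightharpoonup(f_{\infty},g_{\infty})$ weakly in $X$, and $(f_{\infty},g_{\infty})$ is a critical point of $\Phi$.

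The \emph{main obstacle} is to show $(f_{\infty},g_{\infty})\not\equiv(0,0)$, i.e.\ to rule out vanishing of the PS sequence despite the lack of compactness of Sobolev embeddings on $\mathbb{H}^{N}$. My plan here is a hyperbolic concentration-compactness argument in which the role of Euclidean translations is played by isometries of $\mathbb{H}^{N}$. Since $\Phi$ is invariant under the full isometry group of $\mathbb{H}^{N}$, I can translate $(f_{n},g_{n})$ by suitable isometries $\tau_{n}$ so that a fixed positive fraction of $|f_{n}|^{(p+1)/p}+|g_{n}|^{(q+1)/q}$ sits inside a fixed geodesic ball; a hyperbolic Lions-type lemma then forces a nontrivial weak limit. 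A standard dichotomy analysis, combined with the strict sub-additivity of $c$ inherited from the spectral gap $(N-1)^{2}/4$ (which separates the problem on $\mathbb{H}^{N}$ from its Euclidean limit), excludes energy splitting. Nonnegativity is then obtained by replacing $(f,g)$ with $(|f|,|g|)$: positivity of the Green kernel of $-\Delta_{\mathbb{H}^{N}}$ gives $\int|f|\,T|g|\ge\int f\,Tg$, whence $\Phi(|f|,|g|)\le\Phi(f,g)$, so $(u,v)=(T|g_{\infty}|,T|f_{\infty}|)$ is a nonnegative ground state; it is nontrivial because $c>0$.
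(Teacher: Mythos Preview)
Your proposal follows the dual variational route of Cl\'ement--de Figueiredo--Mitidieri, whereas the paper works directly with the strongly indefinite primal functional $I$ on $H^{1}(\mathbb{H}^{N})\times H^{1}(\mathbb{H}^{N})$. Concretely, the paper first invokes Theorem~\ref{tm:1.3} so that the set of nontrivial critical points is nonempty, defines $I^{\infty}=\inf\{I(z):I'(z)=0,\ z\neq 0\}$, takes a minimising sequence $z_{n}=(u_{n},v_{n})$, shows it is bounded via $\langle I'(z_{n}),z_{n}^{\pm}\rangle=0$, and then recovers compactness by translating with hyperbolic isometries: it uses the half-space pieces $A(a,r)$ and a Bhakta--Sandeep concentration function $Q_{n}(r)=\sup_{x\in S_{r}}\int_{A(x,r)}[(u_{n})_{+}]^{q+1}$ to pin down mass in a fixed region after composing with $T_{n}\in\mathcal{I}(\mathbb{H}^{N})$. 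Nonnegativity is obtained a posteriori because the functional is built with $(u_{+})^{q+1}$, $(v_{+})^{p+1}$, so testing the first equation with $u_{-}$ forces $u_{-}=0$. Your dual approach is self-contained (no appeal to Theorem~\ref{tm:1.3}) and sidesteps the strong indefiniteness; the decisive compactness step---isometry translations on $\mathbb{H}^{N}$ to defeat vanishing---is the same idea in both arguments.

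One point in your outline needs tightening. Replacing a critical point $(f_{\infty},g_{\infty})$ of $\Phi$ by $(|f_{\infty}|,|g_{\infty}|)$ indeed lowers $\Phi$ (by positivity of the Green kernel), but it does not automatically yield a \emph{critical point}, so $(T|g_{\infty}|,T|f_{\infty}|)$ need not solve the system. You should either (i) truncate the nonlinearities to the positive part from the outset, as the paper does, so that any critical point is nonnegative by testing against the negative parts, or (ii) show that the mountain-pass level coincides with the infimum of $\Phi$ over a Nehari-type constraint and that $(|f_{\infty}|,|g_{\infty}|)$ still lies on that constraint, whence it is a minimiser and hence critical. Either fix is routine, but as written the last sentence is a gap.
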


\section  {Proof of theorem \ref{tm:1.1}}

The main purpose in this section is to prove Hyperbolic symmetry properties of solutions of (\ref{eq:1.1}). The way of proving this symmetry is by move planes, as originally introduced by Alex androff\cite{H}, later used by Serrin\cite{S2}, and extensively used in recent times after the world of Gidas-Ni-Nirenberg\cite{GNN}. The case on noncompact manifold was studied in \cite{ADG}\cite{AG}.

\textbf{Proof of Theorem \ref{tm:1.1}:}

Let $A_{t}$ be a one-parameter group of isometries of $\mathbb{H}^{N}$ which is $C^{1}(\mathbb{R}\times \mathbb{H}^{N},\mathbb{H}^{N})$ and $I$ be a reflection (i.e. $I$ is an isometry and $I^{2}$=Identity) satisfying the invariance condition
 \[A_{t}IA_{t}=I,\,\forall t\in \mathbb{R}.\]
 We translate the reflection $I$ using $A_{t}$ to define a one-parameter family of reflections
 \[I_{t}=A_{t}IA_{-t}.\]
 Let $U_{t}$ be the Hyper-surface of $\mathbb{H}^{N}$ which is fixed by $I_{t}$. We also  assume  that $\bigcup\limits_{t_{1}<t<t_{2}}U_{t}$  is  open for all $t_{1},\,t_{2}\in \mathbb{R}$, and $\bigcup\limits_{t\in \mathbb{R}}U_{t}=\mathbb{H}^{N}$. For $t\in \mathbb{R}$, define
 \[Q_{t}=\bigcup\limits_{-\infty<s<t}U_{s},\ \ \,Q^{t}=\bigcup\limits_{t<s<\infty}U_{s},\]
 then we have $I_{t}(Q_{t})\subset Q^{t}$,and $I_{t}(Q^{t})\subset Q_{t}$ for all $t\in \mathbb{R}$.
 For $t\in \mathbb{R}$ and $x\in Q_{t}$, we define $x_{t}=I_{t}(x)$, and
 \[u_{t}(x)=u(I_{t}(x)),\,v_{t}(x)=v(I_{t}(x)).\]

 Define
\[\Lambda=\{t\in \mathbb{R}:\forall \tau>t,\,u\geq u_{\tau} ,\,and \quad   v\geq v_{\tau},\ \ \, in \quad  Q_{\tau}\},\]
the first step of proof is to show that the set $\Lambda$ is non-empty, the second step is to prove that $\Lambda$ is bounded from below. Finally we will show that if $\bar{\Lambda}=\inf\Lambda$, then $u\equiv u_{\bar{\Lambda}},\,v\equiv v_{\bar{\Lambda}}$ in $Q_{\bar{\Lambda}}$.

Step 1. $\Lambda$ is non-empty.

Since $(u,v)\in H^{1}(\mathbb{H}^{N})\times H^{1}(\mathbb{H}^{N})$, we can take $(u_{\lambda}-u)^{+}$ or $(v_{\lambda}-v)^{+}$ as text function. Then we obtain
\[\begin{array}{ll}
&\ \  \int_{Q_{\lambda}}|\nabla_{\mathbb{H}^{N}}(u_{\lambda}-u)^{+}|^{2}\ dV_{\mathbb{H}^{N}}\\[2mm]
&= \int_{Q_{\lambda}}(|v_{\lambda}|^{p-1}v_{\lambda}-|v|^{p-1}v)(u_{\lambda}-u)^{+}\ dV_{\mathbb{H}^{N}}\\[2mm]
&\leq (\int_{Q_{\lambda}}|v_{\lambda}|^{p+1}\ dV_{\mathbb{H}^{N}})^{\frac{p-1}{p+1}}(\int_{Q_{\lambda}}|(v_{\lambda}-v)^{+}|^{p+1}
\ dV_{\mathbb{H}^{N}})^{\frac{1}{p+1}}(\int_{Q_{\lambda}}|(u_{\lambda}-u)^{+}|^{p+1}\ dV_{\mathbb{H}^{N}})^{\frac{1}{p+1}}\\[2mm]
&\leq c(\int_{Q_{\lambda}}|v_{\lambda}|^{p+1}\ dV_{\mathbb{H}^{N}})^{\frac{p-1}{p+1}}
(\int_{Q_{\lambda}}|\nabla_{\mathbb{H}^{N}}(v_{\lambda}-v)^{+}|^{2}\ dV_{\mathbb{H}^{N}})^{\frac{1}{2}}
(\int_{Q_{\lambda}}|\nabla_{\mathbb{H}^{N}}(u_{\lambda}-u)^{+}|^{2}\ dV_{\mathbb{H}^{N}})^{\frac{1}{2}},
\end{array}\]
and
\[\begin{array}{ll}
&\ \  \int_{Q_{\lambda}}|\nabla_{\mathbb{H}^{N}}(v_{\lambda}-v)^{+}|^{2}\ dV_{\mathbb{H}^{N}}\\[2mm]
&\leq c(\int_{Q_{\lambda}}|u_{\lambda}|^{q+1}\ dV_{\mathbb{H}^{N}})^{\frac{q-1}{q+1}}
(\int_{Q_{\lambda}}|\nabla_{\mathbb{H}^{N}}(u_{\lambda}-u)^{+}|^{2}\ dV_{\mathbb{H}^{N}})^{\frac{1}{2}}
(\int_{Q_{\lambda}}|\nabla_{\mathbb{H}^{N}}(v_{\lambda}-v)^{+}|^{2}\ dV_{\mathbb{H}^{N}})^{\frac{1}{2}}.
\end{array}\]
It implies that
\[\begin{array}{ll}
&\ \  (\int_{Q_{\lambda}}|\nabla_{\mathbb{H}^{N}}(u_{\lambda}-u)^{+}|^{2}\ dV_{\mathbb{H}^{N}})^{\frac{1}{2}}\\[2mm]
&\leq c(\int_{Q_{\lambda}}|v_{\lambda}|^{p+1}\ dV_{\mathbb{H}^{N}})^{\frac{p-1}{p+1}}
(\int_{Q_{\lambda}}|\nabla_{\mathbb{H}^{N}}(v_{\lambda}-v)^{+}|^{2}\ dV_{\mathbb{H}^{N}})^{\frac{1}{2}}.
\end{array}\]
and
\[\begin{array}{ll}
&\ \  (\int_{Q_{\lambda}}|\nabla_{\mathbb{H}^{N}}(v_{\lambda}-v)^{+}|^{2}\ dV_{\mathbb{H}^{N}})^{\frac{1}{2}}\\[2mm]
&\leq c(\int_{Q_{\lambda}}|u_{\lambda}|^{q+1}\ dV_{\mathbb{H}^{N}})^{\frac{q-1}{q+1}}
(\int_{Q_{\lambda}}|\nabla_{\mathbb{H}^{N}}(u_{\lambda}-u)^{+}|^{2}\ dV_{\mathbb{H}^{N}})^{\frac{1}{2}}.
\end{array}\]
Thus
\[[1-c(\int_{Q_{\lambda}}|u_{\lambda}|^{q+1}\ dV_{\mathbb{H}^{N}})^{\frac{q-1}{q+1}}
(\int_{Q_{\lambda}}|v_{\lambda}|^{p+1}\ dV_{\mathbb{H}^{N}})^{\frac{p-1}{p+1}}]
(\int_{Q_{\lambda}}|\nabla_{\mathbb{H}^{N}}(u_{\lambda}-u)^{+}|^{2}\ dV_{\mathbb{H}^{N}})^{\frac{1}{2}}\leq 0.\]
and
\[[1-c(\int_{Q_{\lambda}}|u_{\lambda}|^{q+1}\ dV_{\mathbb{H}^{N}})^{\frac{q-1}{q+1}}
(\int_{Q_{\lambda}}|v_{\lambda}|^{p+1}\ dV_{\mathbb{H}^{N}})^{\frac{p-1}{p+1}}]
(\int_{Q_{\lambda}}|\nabla_{\mathbb{H}^{N}}(v_{\lambda}-v)^{+}|^{2}\ dV_{\mathbb{H}^{N}})^{\frac{1}{2}}\leq 0.\]

Since we can choose $\lambda_{1}\in \mathbb{R}$, such that
\[c(\int_{Q_{\lambda}}|u_{\lambda}|^{q+1}\ dV_{\mathbb{H}^{N}})^{\frac{q-1}{q+1}}(\int_{Q_{\lambda}}|v_{\lambda}|^{p+1}\ dV_{\mathbb{H}^{N}})^{\frac{p-1}{p+1}}<1,\]
Then for any $\lambda>\lambda_{1}$, we have
\[\int_{Q_{\lambda}}|\nabla_{\mathbb{H}^{N}}(u_{\lambda}-u)^{+}|^{2}\ dV_{\mathbb{H}^{N}}\leq 0,\]
and
\[\int_{Q_{\lambda}}|\nabla_{\mathbb{H}^{N}}(v_{\lambda}-v)^{+}|^{2}\ dV_{\mathbb{H}^{N}}\leq 0.\]
Therefore $(u_{\lambda}-u)^{+}\equiv 0$,\,and  $(v_{\lambda}-v)^{+}\equiv 0$ in $Q_{\lambda}$,\, and $(\lambda_{1},+\infty)\subset \Lambda$.

Step 2. $\Lambda$ is bounded from below.

By $(u,v)\in H^{1}(\mathbb{H}^{N})\times H^{1}(\mathbb{H}^{N})$, we have
\[\lim\limits_{\lambda\rightarrow -\infty}\sup\limits_{Q_{\lambda}}u=\lim\limits_{\lambda\rightarrow -\infty}\sup\limits_{Q_{\lambda}}v=0,\]
and we may choose $\lambda_{2}$ such that
\[\sup\limits_{Q_{\lambda_{2}}}u<\frac{\sup\limits_{\mathbb{H}^{N}}u}{2},\ \ \,\sup\limits_{Q_{\lambda_{2}}}v<\frac{\sup\limits_{\mathbb{H}^{N}}v}{2}.\]
This implies that all $\lambda\in (-\infty,\lambda_{2})$ do not belong to $\Lambda$. Therefore $\Lambda$ is bounded from below, and we let $\bar{\Lambda}=\inf\Lambda$.

Step 3. $u\equiv u_{\bar{\Lambda}},\,v\equiv v_{\bar{\Lambda}}$, in $Q_{\bar{\Lambda}}$.

In fact, it is clear that by continuity of the foliation and of $(u,v)$, we have
\[u\geq u_{\bar{\Lambda}},\,v\geq v_{\bar{\Lambda}}, in \   Q_{\bar{\Lambda}}.\]

Now observing that if $u\equiv u_{\bar{\Lambda}}$, it follows from
\begin{equation}\label{eq:2.1}
\left\{
  \begin{array}{ll}
  \displaystyle
-\Delta_{\mathbb{H}^{N}}(u-u_{\bar{\Lambda}})=|v|^{p-1}v-|v_{\bar{\Lambda}}|^{p-1}v_{\bar{\Lambda}}      ,    \\
\displaystyle
-\Delta_{\mathbb{H}^{N}}(v-v_{\bar{\Lambda}})=|u|^{q-1}u-|u_{\bar{\Lambda}}|^{q-1}u_{\bar{\Lambda}}     ,    \\
\end{array}
\right.
\end{equation}
we can get $v\equiv v_{\bar{\Lambda}}$. So if we assume, by contradiction that the step 3 is not true, we conclude that
\begin{equation}\label{eq:2.2}
\left\{
  \begin{array}{ll}
  \displaystyle
u\geq u_{\bar{\Lambda}},\, v\geq v_{\bar{\Lambda}}      &\ \ {\rm for}\ x\in\  Q_{\bar{\Lambda}},
\\
\displaystyle
u\not\equiv u_{\bar{\Lambda}},\, v\not\equiv v_{\bar{\Lambda}}      ,
\end{array}
\right.
\end{equation}
By the strong maximum principle and connectedness of $Q_{\bar{\Lambda}}$ implying that
\begin{equation}\label{eq:2.3}
\left\{
  \begin{array}{ll}
  \displaystyle
u> u_{\bar{\Lambda}},      &\ \ {\rm for}\ x\in\  Q_{\bar{\Lambda}},    \\
\\
\displaystyle
v> v_{\bar{\Lambda}},      &\ \ {\rm for}\ x\in\  Q_{\bar{\Lambda}},    \\
\end{array}
\right.
\end{equation}
and
\begin{equation}\label{eq:2.4}
\left\{
  \begin{array}{ll}
  \displaystyle
X(u)(x)> 0,      &\ \ {\rm for}\ x\in\  U_{\bar{\Lambda}},    \\
\\
\displaystyle
X(v)(x)> 0,      &\ \ {\rm for}\ x\in\  U_{\bar{\Lambda}},    \\
\end{array}
\right.
\end{equation}
where $X$ is the killing vector field associated to the transformation group $A_{t}$, we shall see that this is impossible.

Choose $x_{1}\in U_{\bar{\Lambda}}$, and $R_{0}>0$. By the continuity of the foliation, there would exist $\varepsilon_{0}>0$, such that for $0<\varepsilon<\varepsilon_{0},\,I_{\bar{\Lambda}-\varepsilon}(B(x_{1},R_{0}))\subset B(x_{1},2R_{0})$. Moreover, by the definition of $\bar{\Lambda}$, we could construct an increasing sequence $\lambda_{n}>\bar{\lambda}$, such that $\lambda_{n}>\bar{\lambda}-\varepsilon_{0}$, and $\exists y_{n}\in Q_{\lambda_{n}}$, such that
\[u(y_{n})<u(I_{\lambda_{n}}(y_{n}))=u_{\lambda_{n}}(y_{n}),\]
or
\[v(y_{n})<v(I_{\lambda_{n}}(y_{n}))=v_{\lambda_{n}}(y_{n}).\]

We claim that $y_{n}\in B(x_{1},2R_{0})$. If it is not true taking $(u_{\lambda_{n}}-u)^{+}$ or $(v_{\lambda_{n}}-v)^{+}$ as test function(as in the first step). We would have that $u_{\lambda_{n}}\leq u$, or $v_{\lambda_{n}}\leq v$, in $Q_{\lambda_{n}}$. This proves our claim.

Modulo a subsequence. There would exist $y\in Q_{\bar{\Lambda}}$, such that $y_{n}\longrightarrow y$. By continuity we have
\[u(y)=\lim\limits_{n\rightarrow +\infty}u(y_{n})\leq\lim\limits_{n\rightarrow +\infty}u_{\lambda_{n}}(y_{n})=u_{\bar{\Lambda}}(y),\]
or
\[v(y)=\lim\limits_{n\rightarrow +\infty}v(y_{n})\leq\lim\limits_{n\rightarrow +\infty}v_{\lambda_{n}}(y_{n})=v_{\bar{\Lambda}}(y).\]
It implies that $y\in U_{\bar{\Lambda}}$. On the other hand, there exist points $\xi_{n}$ in the line segment between $y_{n}$ and $I_{\lambda_{n}}(y_{n})$, such that $X(u)(\xi_{n})\leq 0$, passing to the limit we should have $X(u)(y)\leq 0$. This is impossible $X(u)(x)> 0$, for all $x\in U_{\bar{\Lambda}}$. Hence $u\equiv u_{\bar{\Lambda}}$, and $v\equiv v_{\bar{\Lambda}}$, in $Q_{\bar{\Lambda}}$. $\Box$

\section  {Decay estimates}
The hyperbolic N-space $\mathbb{H}^N$, $N\geq 2$ is a complete simple connected Riemannian manifold
having constant sectional curvature equal to -1, and for a given dimensional number, any
two such spaces are isometric \cite{W}. There are several models for $\mathbb{H}^N$, the most important
being the half-space model, the ball model, and the hyperboloid or Lorentz model.

Let $\mathbb{B}^{N}=\{x\in \mathbb{R}^{N}:|x|<1\}$ denotes the unit disc in $\mathbb{R}^{N}$. The space $\mathbb{H}^{N}$ endowed with the Riemannian metric $g$ given by $g_{ij}=(\frac{2}{1-|x|^{2}})^{2}\delta_{ij}$ is called the ball model of the Hyperbolic space. The hyperbolic Laplacian $\Delta_{\mathbb{H}^{N}}$ is given by
\[\Delta_{\mathbb{H}^{N}}=(\frac{1-|x|^{2}}{2})^{2}\Delta+(N-2)\frac{1-|x|^{2}}{2}\langle x,\nabla\rangle,\]

Let $(u,v)$ be a positive symmetric solution of problem (\ref{eq:1.1}), and $u=u(|\xi|),\,v=v(|\xi|),\,|\xi|<1$, then
\begin{equation}\label{eq:3.1}
\left\{
  \begin{array}{ll}
  \displaystyle
(\frac{1-|\xi|^{2}}{2})^{2}\Delta u+(N-2)\frac{1-|\xi|^{2}}{2}\langle \xi,\nabla u\rangle+v^{p}=0,          \\
\\
\displaystyle
(\frac{1-|\xi|^{2}}{2})^{2}\Delta v+(N-2)\frac{1-|\xi|^{2}}{2}\langle \xi,\nabla v\rangle+u^{q}=0,         \\
\end{array}
\right.
\end{equation}

Setting $|\xi|=\tanh\frac{t}{2},\,u(t)=u(\tanh\frac{t}{2}),\,v(t)=v(\tanh\frac{t}{2}),\,k(t)=(\sinh t)^{N-1}$, it is easy to see that
\[\int_{\mathbb{H}^{N}}|u|^{q+1}\ dV_{\mathbb{H}^{N}}=w_{N-1}\int_{0}^{\infty}k(t)|u|^{q+1}dt,\]
\[\int_{\mathbb{H}^{N}}|v|^{p+1}\ dV_{\mathbb{H}^{N}}=w_{N-1}\int_{0}^{\infty}k(t)|v|^{p+1}dt,\]
\[\int_{\mathbb{H}^{N}}|\nabla_{\mathbb{H}^{N}}u|^{2}\ dV_{\mathbb{H}^{N}}=w_{N-1}\int_{0}^{\infty}k(t)|u'|^{2}dt,\]
\[\int_{\mathbb{H}^{N}}|\nabla_{\mathbb{H}^{N}}v|^{2}\ dV_{\mathbb{H}^{N}}=w_{N-1}\int_{0}^{\infty}k(t)|v'|^{2}dt,\]
where $w_{N-1}$ denotes the surface area of $S^{N-1}$.

In addition, (\ref{eq:3.1}) rewrites
\begin{equation}\label{eq:3.2}
\left\{
  \begin{array}{ll}
  \displaystyle
u''+\frac{N-1}{\tanh t}u'+v^{p}=0,          \\
\\
\displaystyle
v''+\frac{N-1}{\tanh t}v'+u^{q}=0,         \\
\\
\displaystyle
u'(0)=v'(0)=0.         \\
\end{array}
\right.
\end{equation}
as well as
\begin{equation}\label{eq:3.3}
\left\{
  \begin{array}{ll}
  \displaystyle
(k(t)u')'+k(t)v^{p}=0,          \\
\\
\displaystyle
(k(t)v')'+k(t)u^{q}=0,         \\
\\
\displaystyle
u'(0)=v'(0)=0
\end{array}
\right.
\end{equation}
and if $(u,v)\in H^{1}(\mathbb{H}^{N})\times H^{1}(\mathbb{H}^{N})$ solves (\ref{eq:3.1}), then
\[\int_{0}^{\infty}k(t)u'v'dt=\int_{0}^{\infty}k(t)u^{q+1}dt=\int_{0}^{\infty}k(t)v^{p+1}dt.\]

Now, let us notice that $(u,v)$ solves (\ref{eq:3.2}), and
\[J_{(u,v)}(t)=u'v'+\frac{|v|^{p+1}}{p+1}+\frac{|u|^{q+1}}{q+1},\]
then
\[\begin{array}{ll}
&\ \  \frac{d}{dt}J_{(u,v)}(t)\\[2mm]
&= u''v'+u'v''+v'v^{p}+u'u^{q}\\[2mm]
&=(u''+v^{p})v'+u'(v''+u^{q})\\[2mm]
&=-\frac{N-1}{\tanh t}v'^{2}--\frac{N-1}{\tanh t}u'^{2}\\[2mm]
&\leq 0.   \ \ \ \ \ \ \ \ \ \ \ \  \forall t> 0.
\end{array}\]

\begin{lemma}\label{lm:3.1}
Let $(u,v)\in H^{1}(\mathbb{H}^{N})\times H^{1}(\mathbb{H}^{N})$ be a positive solution of (\ref{eq:3.2}), then $u'(t)<0,\,v'(t)<0$, for every $t>0$, and
\[\lim\limits_{t\rightarrow +\infty}u(t)=\lim\limits_{t\rightarrow +\infty}v(t)=\lim\limits_{t\rightarrow +\infty}u'(t)=\lim\limits_{t\rightarrow +\infty}v'(t)=0.\]
\end{lemma}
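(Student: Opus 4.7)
The plan is to prove the three conclusions of the lemma (sign of the derivatives, vanishing of $u,v$, vanishing of $u',v'$) in sequence, using only the divergence form (3.3) and the $H^1$ hypothesis as inputs.

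First, I would establish $u',v'<0$ and the limits $u,v\to 0$. Starting from $(k(t)u'(t))' = -k(t)v(t)^p$ and noting that $k(0)=0=u'(0)$, integration on $[0,t]$ gives
\[k(t)u'(t) = -\int_0^t k(s)\,v(s)^p\,ds < 0\]
for every $t>0$, so $u'(t)<0$; the argument for $v'(t)<0$ is symmetric. Hence $u,v$ are positive and strictly decreasing on $(0,\infty)$, so the limits $L:=\lim_{t\to\infty} u(t)$ and $M:=\lim_{t\to\infty} v(t)$ exist in $[0,\infty)$. The identity
\[\int_0^\infty k(s)\,u(s)^{q+1}\,ds \;=\; \int_0^\infty k(s)\,u'(s)\,v'(s)\,ds\]
recorded just above the lemma, combined with Cauchy--Schwarz and the $H^1$ hypothesis, shows the left-hand side is finite. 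Since $u(s)\ge L$ and $k$ grows exponentially, $L>0$ would force $\int_0^\infty k(s)\,ds<\infty$, a contradiction. Hence $L=0$, and similarly $M=0$.

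The main obstacle will be showing $u'(t),v'(t)\to 0$, since a priori $|u'|$ is not monotone, so no sign considerations produce a limit directly. My plan is to introduce $f(t):=-k(t)u'(t)$, which by the monotonicity step is positive and nondecreasing on $(0,\infty)$. Using $u(\infty)=0$ and the monotonicity of $f$,
\[u(t) \;=\; \int_t^\infty (-u'(s))\,ds \;=\; \int_t^\infty \frac{f(s)}{k(s)}\,ds \;\ge\; f(t)\int_t^\infty \frac{ds}{k(s)}.\]
A direct calculation with $k(s)=(\sinh s)^{N-1}$ shows $k(t)\int_t^\infty ds/k(s) \to 1/(N-1)$ as $t\to\infty$, so this product is bounded below by a positive constant $c_N$ for $t$ large; therefore
\[|u'(t)| \;=\; \frac{f(t)}{k(t)} \;\le\; \frac{u(t)}{k(t)\int_t^\infty ds/k(s)} \;\le\; C\,u(t) \longrightarrow 0.\]
The same argument applied to $g(t):=-k(t)v'(t)$ gives $v'(t)\to 0$, completing the proof. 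The whole plan thus rests on the elementary comparison $|u'|\lesssim u$ (valid only because $k$ grows exponentially), which bypasses any delicate ODE analysis.
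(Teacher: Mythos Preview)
Your proof is correct. The first two parts (strict monotonicity and $u,v\to 0$) follow the same lines as the paper, though for finiteness of $\int_0^\infty k\,u^{q+1}\,dt$ you could have used $u\in L^2(\mathbb{H}^N)$ directly rather than going through the identity and Cauchy--Schwarz. Where you genuinely diverge from the paper is in the treatment of $u',v'\to 0$. The paper argues via the energy functional $J_{(u,v)}(t)=u'v'+\tfrac{|v|^{p+1}}{p+1}+\tfrac{|u|^{q+1}}{q+1}$, whose monotonicity forces $\lim_{t\to\infty}u'(t)v'(t)$ to exist, and then combines this with the $H^1$ consequence $\liminf_{t\to\infty}k(t)\bigl(u'^2+u^2\bigr)=0$ (and the analogue for $v$) to pin down all four limits. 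Your route avoids $J_{(u,v)}$ entirely: from the monotonicity of $f(t)=-k(t)u'(t)$ and the asymptotic $k(t)\int_t^\infty k(s)^{-1}\,ds\to 1/(N-1)$ you obtain the pointwise bound $|u'(t)|\le C\,u(t)$ for large $t$, which immediately gives $u'\to 0$. This is more elementary and in fact delivers a sharper conclusion, since the comparison $|u'|\lesssim u$ (and symmetrically $|v'|\lesssim v$) already anticipates the equal exponential decay rates established in Lemma~3.2.
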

\begin{proof}
By equation (\ref{eq:3.3}), we have $(k(t)u'(t))'<0$, and $(k(t)v'(t))'<0$, and from $u'(0)=v'(0)=0$, then $u'(t)<0,\,v'(t)<0,\,\forall t>0.$

In particular, by $u(t)>0,\,v(t)>0$, it exist
\[u(\infty)=\lim\limits_{t\rightarrow +\infty}u(t),\ \ \,v(\infty)=\lim\limits_{t\rightarrow +\infty}v(t).\]
Since $J_{(u.v)}$ is decreasing, then $\lim\limits_{t\rightarrow +\infty}u'(t)v'(t)$ exists.

Since $(u,v)\in H^{1}(\mathbb{H}^{N})\times H^{1}(\mathbb{H}^{N})$, then
\[\liminf\limits_{t\rightarrow +\infty}k(t)[u'^{2}(t)+u^{2}(t)]=0,\]
and
\[\liminf\limits_{t\rightarrow +\infty}k(t)[v'^{2}(t)+v^{2}(t)]=0.\]
Thus, we have that
\[\lim\limits_{t\rightarrow +\infty}u(t)=\lim\limits_{t\rightarrow +\infty}v(t)=\lim\limits_{t\rightarrow +\infty}u'(t)=\lim\limits_{t\rightarrow +\infty}v'(t)=0.\]
\end{proof}

\begin{lemma}\label{lm:3.2}
Let $(u,v)$ be a positive solution of problem (\ref{eq:1.1}), and $(u,v)\in H^{1}(\mathbb{H}^{N})\times H^{1}(\mathbb{H}^{N})$, then
\[\lim\limits_{t\rightarrow +\infty}\frac{\log u^{2}}{t}=\lim\limits_{t\rightarrow +\infty}\frac{\log v^{2}}{t}=\lim\limits_{t\rightarrow +\infty}\frac{\log u'^{2}}{t}=\lim\limits_{t\rightarrow +\infty}\frac{\log v'^{2}}{t}=-2(N-1).\]
\end{lemma}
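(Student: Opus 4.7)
The plan is to reduce all four limits to proving $u'(t)/u(t) \to -(N-1)$ and $v'(t)/v(t) \to -(N-1)$. Since by Lemma~\ref{lm:3.1} we have $u, v \to 0^+$, we get $\log u, \log v \to -\infty$, so L'Hopital's rule yields $\lim \log u^2/t = \lim 2u'/u$, and likewise for $v$. The analogous statements for $u'$ and $v'$ then follow by applying L'Hopital once more, using that the equation $u'' = -(N-1)\coth t \cdot u' - v^p$ forces $u''/u'$ to inherit the rate of $u'/u$ once the decay is exponential.

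Setting $\xi := u'/u$ and $\eta := v'/v$, the system (\ref{eq:3.2}) yields the coupled Riccati equations
\begin{equation*}
\xi' + \xi^2 + (N-1)\coth t \cdot \xi + \frac{v^p}{u} = 0, \qquad \eta' + \eta^2 + (N-1)\coth t \cdot \eta + \frac{u^q}{v} = 0.
\end{equation*}
The first step is a preliminary exponential-decay estimate $u(t), v(t) \le C e^{-\alpha t}$ for some $\alpha > 0$: since $u, v \to 0$, for $t$ large we have $v^{p-1}, u^{q-1} \le 1$, so both $u$ and $v$ are subsolutions of $-\Delta_{\mathbb{H}^N} w \le w$ at infinity, and comparison with a radial supersolution of $-\Delta_{\mathbb{H}^N} \Phi = \Phi$, combined with the fact that the bottom of the $L^2$-spectrum of $-\Delta_{\mathbb{H}^N}$ equals $(N-1)^2/4$, delivers the bound. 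Assuming $p, q > 1$, this already forces $v^p/u, u^q/v \to 0$, so any accumulation point $L$ of $\xi$ (resp.\ of $\eta$) must satisfy $L^2 + (N-1)L = 0$, i.e.\ $L \in \{0, -(N-1)\}$; the value $0$ is incompatible with the preliminary exponential decay.

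To upgrade from ``every accumulation point equals $-(N-1)$'' to actual convergence, I would apply the Liouville-type substitution $u = (\sinh t)^{-(N-1)/2}\, \bar u$, $v = (\sinh t)^{-(N-1)/2}\, \bar v$, which eliminates the first-order terms and transforms (\ref{eq:3.2}) into the asymptotically constant-coefficient system
\begin{equation*}
\bar u'' - \tfrac{(N-1)^2}{4}\,\bar u + g_1(t) = 0, \qquad \bar v'' - \tfrac{(N-1)^2}{4}\,\bar v + g_2(t) = 0,
\end{equation*}
in which the perturbations $g_i(t)$ are integrable at infinity thanks to the preliminary exponential decay and $p, q > 1$. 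Levinson's asymptotic theorem then pins down the decaying branch, yielding $\bar u, \bar v \sim C e^{-(N-1)t/2}$ and therefore $u, v \sim C' e^{-(N-1)t}$, which is exactly the rate required; the assertions for $u', v'$ follow either by differentiating the asymptotic expansion or by applying Levinson's theorem to the derivative system.

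The main obstacle is the coupling of the two Riccati equations through $v^p/u$ and $u^q/v$, which prevents scalar asymptotic ODE theory from being applied to $u$ and $v$ in isolation without a priori control on the other component. The remedy, and the reason the Liouville-type substitution is so effective, is that at the linear level the transformed system decouples into two independent scalar equations with the same asymptotic exponent $(N-1)/2$, so the coupling enters only through an integrable nonlinear perturbation and does not alter the leading-order decay rate.
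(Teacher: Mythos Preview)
There is a real gap. The step ``assuming $p,q>1$, this already forces $v^p/u,\,u^q/v\to 0$'' does not follow from the preliminary upper bound $u,v\le Ce^{-\alpha t}$ alone: to control the ratio $v^p/u$ you need a \emph{lower} bound on $u$, and you never establish one. Nothing so far prevents $u$ from decaying much faster than $v^p$, in which case the forcing term in the Riccati equation for $\xi=u'/u$ does not disappear. A related slip occurs one line earlier: because the system is coupled, $-\Delta_{\mathbb{H}^N}u=v^p\le v$, not $\le u$, so neither $u$ nor $v$ by itself is a subsolution of $-\Delta_{\mathbb{H}^N}w\le w$; only the sum $u+v$ is. The same circularity bites again when you invoke Levinson's theorem: after the Liouville substitution the inhomogeneity in the $\bar u$--equation is $(\sinh t)^{(1-p)(N-1)/2}\,\bar v^{\,p}$, and since $\bar v=(\sinh t)^{(N-1)/2}v$, its integrability already presupposes that $v$ decays at least like $e^{-(N-1)t/2}$ --- which is stronger than the crude $e^{-\alpha t}$ you have, and is essentially the conclusion you are after.

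The paper's argument bypasses this circularity by an elementary comparison for the sum $w=u+v$. From $\coth t\le 1+\varepsilon$, $v^p\le\varepsilon v$, $u^q\le\varepsilon u$ (valid for large $t$ since $p,q>1$) and $u',v'<0$ one obtains the pair of constant--coefficient linear inequalities
\[
w''+(N-1)(1+\varepsilon)\,w'\;\le\;0\;\le\;w''+(N-1)\,w'+\varepsilon\,w,
\]
whose characteristic roots squeeze to $-(N-1)$ as $\varepsilon\to 0$. This produces matching upper and lower exponential bounds on $w$ and on $w'$ in one stroke; positivity of $u,v$ (negativity of $u',v'$) then transfers the rate to each component. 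The point is that the linearization $v^p\le\varepsilon v$ is self--contained --- it never asks how $v$ compares to $u$ --- so no a~priori lower bound on either function and no bootstrapping is required.
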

\begin{proof}
By (\ref{lm:3.1}), we obtain
\[\lim\limits_{t\rightarrow +\infty}u(t)=\lim\limits_{t\rightarrow +\infty}v(t)=0.\]
Then it exists $t_{\varepsilon}>0$, such that
\[\coth t\leq 1+\varepsilon,\,v^{p}(t)\leq \varepsilon v(t),\,u^{q}(t)\leq \varepsilon u(t),\ \ \,\forall t\geq t_{\varepsilon}.\]

Since $u'(t)<0,\,v'(t)<0$, we have for $t\geq t_{\varepsilon}$,
\begin{equation}\label{eq:3.4}
\left\{
  \begin{array}{ll}
  \displaystyle
u''+(N-1)(1+\varepsilon)u'\leq u''+(N-1)\coth tu'+v^{p}=0,          \\
\\
\displaystyle
u''+(N-1)u'+\varepsilon v\geq u''+(N-1)\coth tu'+v^{p}=0.         \\
\end{array}
\right.
\end{equation}
and
\begin{equation}\label{eq:3.5}
\left\{
  \begin{array}{ll}
  \displaystyle
v''+(N-1)(1+\varepsilon)v'\leq v''+(N-1)\coth tv'+u^{q}=0,          \\
\\
\displaystyle
v''+(N-1)v'+\varepsilon u\leq v''+(N-1)\coth tv'+u^{q}=0.         \\
\end{array}
\right.
\end{equation}
Then, we get
\[(u+v)''+(N-1)(1+\varepsilon)(u+v)'\leq (u+v)''+(N-1)\coth t(u+v)'+u^{q}+v^{p}=0,\]
\[(u+v)'+(N-1)(u+v)'+\varepsilon(u+v)\geq (u+v)''+(N-1)\coth t(u+v)'+u^{q}+v^{p}=0.\]
It implies that
\begin{equation}\label{eq:3.6}
(u+v)''+(N-1)(1+\varepsilon)(u+v)'\leq (u+v)''+(N-1)(u+v)'+\varepsilon(u+v).
 \end{equation}

 Let $\mu^{-}(\varepsilon)=-(N-1)(1+\varepsilon),\,\mu^{+}(\varepsilon)=0$, and $\nu^{-}(\varepsilon)=\frac{-(N-1)-\sqrt{(N-1)^{2}-4\varepsilon}}{2},\,\nu^{+}(\varepsilon)=\frac{-(N-1)+\sqrt{(N-1)^{2}-4\varepsilon}}{2}$ be the characteristic roots of the differential Polinamials in the L.h.S and R.h.S of (\ref{eq:3.6}) respectively. We choose $\varepsilon<\frac{(N-1)^{2}}{4}$, then $\nu^{\pm}(\varepsilon)$ is real and distinct. Similar as\cite{MS}, we can get

\begin{equation}\label{eq:3.7}
\left\{
  \begin{array}{ll}
  \displaystyle
u(t)+v(t)\geq ([u(t_{\varepsilon})+v(t_{\varepsilon})e^{-\mu^{-}(\varepsilon)t_{\varepsilon}}])e^{\mu^{-}(\varepsilon)t},     \ \ \ \ \ \forall t\geq t_{\varepsilon} ,    \\
\\
\displaystyle
u(t)+v(t)\leq ([u(t_{\varepsilon})+v(t_{\varepsilon})e^{-\nu^{-}(\varepsilon)t_{\varepsilon}}])e^{\nu^{-}(\varepsilon)t},     \ \ \ \ \ \forall t\geq t_{\varepsilon} ,.         \\
\end{array}
\right.
\end{equation}

\begin{equation}\label{eq:3.8}
\left\{
  \begin{array}{ll}
  \displaystyle
u'(\tau)+v'(\tau)\geq \mu^{-}(\varepsilon)(u(\tau)+v(\tau)),     \ \ \ \ \ \forall \tau\geq t_{\varepsilon} ,    \\
\\
\displaystyle
u'(\tau)+v'(\tau)\leq \nu^{-}(\varepsilon)(u(\tau)+v(\tau)),     \ \ \ \ \ \forall \tau\geq t_{\varepsilon} ,.         \\
\end{array}
\right.
\end{equation}

We see from (\ref{eq:3.7}) that
\[2\mu^{-}(\varepsilon)\leq \liminf\limits_{t\rightarrow +\infty}\frac{\log(u+v)^{2}}{t}\leq \limsup\limits_{t\rightarrow +\infty}\frac{\log(u+v)^{2}}{t}\leq 2\nu^{-}(\varepsilon),\ \ \,\forall \varepsilon>0,\]
and hence
\begin{equation}\label{eq:3.9}
\lim\limits_{t\rightarrow +\infty}\frac{\log(u+v)^{2}}{t}=-2(N-1),
 \end{equation}
From (\ref{eq:3.7}) and (\ref{eq:3.8}), we also get
\begin{equation}\label{eq:3.10}
\lim\limits_{t\rightarrow +\infty}\frac{\log(u'+v')^{2}}{t}=-2(N-1).
 \end{equation}

Since $u'(t)<0,\,v'(t)<0$, we conclude from (\ref{eq:3.10}) that
\[\lim\limits_{t\rightarrow +\infty}\frac{\log u'^{2}}{t}=-2(N-1),\]
and
\[\lim\limits_{t\rightarrow +\infty}\frac{\log v'^{2}}{t}=-2(N-1).\]
From $u(t)>0,\,v(t)>0$, and (\ref{eq:3.9}), we also have that
\[\lim\limits_{t\rightarrow +\infty}\frac{\log u^{2}}{t}=\lim\limits_{t\rightarrow +\infty}\frac{\log v^{2}}{t}=-2(N-1).\]
\end{proof}

\textbf{Proof of Theorem \ref{tm:1.2}:}
By Lemma \ref{lm:3.1} and Lemma \ref{lm:3.2}, Theorem \ref{tm:1.1} is proved. $\Box$

\section  {Existence of radial solutions of (\ref{eq:1.1})}
Now, we denote the functional
\[I(z)=I(u,v)=\int_{\mathbb{H}^{N}}\nabla_{\mathbb{H}^{N}}u\cdot\nabla_{\mathbb{H}^{N}}v\ dV_{\mathbb{H}^{N}}
-\frac{1}{p+1}\int_{\mathbb{H}^{N}}(v_+)^{p+1}\ dV_{\mathbb{H}^{N}}-\frac{1}{q+1}\int_{\mathbb{H}^{N}}(u_+)^{q+1}\ dV_{\mathbb{H}^{N}},\]
which is the functional of problem (\ref{tm:1.1}), where $z=(u, v)$, $u_+=\max\{u, 0\}, v_+=\max\{v,0\}$.

Observe that the quadratic part of $I$ is well-defined, if $u,\,v\in H^{1}(\mathbb{H}^{N})$. But, if we take such $u$ and $v$, the nonlinear part is well defined if $p$ and $q$ are both less or equal to $\frac{N+2}{N-2}$, for $N\geq 3$. However, we would like to consider pairs $(p,q)$ that do not satisfy this restriction. The basic requirement would be that $(p,q)$ is below the critical hyperbola, one of them could be large that $\frac{N+2}{N-2}$. So, we need inhomogeneous  Sobolev spaces on hyperbolic space.

Now, we will introduce some aspects of the harmonic analysis and the geometry of hyperbolic space, see \cite{B} \cite{H3}\cite{H1}\cite{H2}\cite{T1}  and reference therein.

We consider the Minkowski space $\mathbb{R}^{N+1}$ with the standard Minkowski metric
\[(dg)^{2}=-(dx_{0})^{2}+(dx_{1})^{2}+\cdots+(dx_{N})^{2},\]
and defined the bilinear form on $\mathbb{R}^{N+1}\times \mathbb{R}^{N+1}$
\[[x,y]=x_{0}y_{0}-x_{1}y_{1}-\cdots-x_{N}y_{N}.\]
The Hyperbolic space $\mathbb{H}^{N}$ is defined as a subset of $\mathbb{R}^{N+1}$ by
\[\mathbb{H}^{N}=\{x\in\mathbb{R}^{N+1}\mid [x,x]=1,\,\ and  \,\,  x_{0}>0\}\]
which is the  hyperboloid  model.

In geodesic polar coordinates, the Riemnanian metric is given by
\[(dg)^{2}=(dr)^{2}+(\sinh r)^{2}(dg_{\mathbb{S}^{N-1}})^{2},\]
and the Riemnanian volume by
\[\ dV_{\mathbb{H}^{N}}=(\sinh r)^{N-1}drdV_{\mathbb{S}^{N-1}}.\]

The Fourier transform (as defined by Helgason \cite{H3}   ) takes suitable functions defined on $\mathbb{H}^{N}$ to functions defined on $\mathbb{R}\times\mathbb{S}^{N-1}$. For $w\in\mathbb{S}^{N-1}$, and $\lambda\in\mathbb{C}$, let $b(w)=(1,w)\in\mathbb{R}^{N+1}$, and
\[h_{\lambda,w}:\mathbb{H}^{N}\longrightarrow\mathbb{C},\,\,h_{\lambda,w}(x)=[x,b(w)]^{i\lambda-d},\]
where $d=\frac{N-1}{2}$. It is known that
\begin{equation}\label{eq:4.1}
\Delta_{\mathbb{H}^{N}}h_{\lambda,w}=-(\lambda^{2}+d^{2})h_{\lambda,w}.
 \end{equation}

The Fourier transform of $f\in C_{0}^{\infty}(\mathbb{H}^{N})$ is defined by the formula
\[\tilde{f}(\lambda,w)=F(f)=\int_{\mathbb{H}^{N}}f(x)[x,b(w)]^{i\lambda-d}\ dV_{\mathbb{H}^{N}}.\]
This transformation admits a Fourier inversion formula, if $f\in C_{0}^{\infty}(\mathbb{H}^{N})$, then
\[f(x)=F^{-1}(\tilde{f})=\int_{0}^{\infty}\int_{\mathbb{S}^{N-1}}\tilde{f}(\lambda,w)[x,b(w)]^{-i\lambda-d}|C(\lambda)|^{-2}d\lambda dw,\]
where, for a suitable constant $C$,
\[C(\lambda )=C\frac{\Gamma(i\lambda)}{\Gamma(p+i\lambda)},\]
is the Harish-chardrac-function on $\mathbb{H}^{d}$, and the invariant measure of $\mathbb{S}^{N-1}$ is normalized to $1$.

Now we define the inhomogeneous Sobolev space on $\mathbb{H}^{N}$. There are two possible definitions: using the Riemannian structure or using the Fourier transform. There two definitions agree\cite{T}. For $p \in (1,\infty)$ and $s\in\mathbb{R}$, we defined the Sobolev space $H^{s,p} (\mathbb{H}^{N})$ as the closure of $C_{0}^{\infty}(\mathbb{H}^{N})$ under the norm
\[\|f\|_{H^{s,p}(\mathbb{H}^{N})}=\|(-\Delta_{\mathbb{H}^{N}})^{\frac{s}{2}}f\|_{L^{p}(\mathbb{H}^{N})}=\|F^{-1}(\lambda^{2}+d^{2})^{\frac{s}{2}}F(u)\|_{L^{p}(\mathbb{H}^{N})}.\]

For $s\in\mathbb{R}$, let $H^s(\mathbb{H}^N)=H^{s,2} (\mathbb{H}^{N})$,  in particular, $W^{1,p} (\mathbb{H}^{N})=H^{1,p} (\mathbb{H}^{N})$, and the Sobolev embedding theorem
\[H^{s,p}\hookrightarrow L^{q},\,\,if  \,\,1<p\leq q<\infty,\,\,\,and \,\, s=\frac{N}{p}-\frac{N}{q}\]
holds. Moreover, for $s>t, H^{s}\subset H^{t}$, see \cite{T1}. Let
\[H_{r}^{s}(\mathbb{H}^{N})=\{u\in H^{s}(\mathbb{H}^{N}):u \,\,is\,\, radial\},\]
we have the following imbedding theorem. The case $s=1$ was proved by \cite{BS1}.

\begin{lemma}\label{lm:4.1}
Let $s>0$, then the restriction to $H_{r}^{s}(\mathbb{H}^{N})$ of the Sobolev imbedding of $H^{s}(\mathbb{H}^{N})$ into $L^{r}(\mathbb{H}^{N})$ is continuous, if $2\leq r\leq \frac{2N}{N-2s}$, and it is compact if $2< r<\frac{2N}{N-2s}$.
\end{lemma}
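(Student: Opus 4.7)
The plan is to treat continuity and compactness separately, with the former reducing immediately to the general embedding and the latter following the classical Strauss scheme: radial pointwise decay combined with local compactness on geodesic balls. Continuity is essentially automatic: the Sobolev embedding $H^{s}(\mathbb{H}^{N})\hookrightarrow L^{r}(\mathbb{H}^{N})$ recalled in the paragraph above the lemma, valid whenever $2\le r\le\tfrac{2N}{N-2s}$, restricts tautologically to the closed subspace $H^{s}_{r}(\mathbb{H}^{N})$.

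For compactness in the open range $2<r<\tfrac{2N}{N-2s}$ I would mimic the $s=1$ argument of Bhakta--Sandeep. Its key ingredient is a radial pointwise decay estimate of Strauss type: for smooth radial $u(\rho)$ with $u(\rho)\to 0$ at infinity, writing $u(\rho)^{2}=-2\int_{\rho}^{\infty}u\,u'\,d\tau$ and using that $\sinh\tau\ge\sinh\rho$ for $\tau\ge\rho$ gives
\[
(\sinh\rho)^{N-1}u(\rho)^{2}\le 2\int_{\rho}^{\infty}(\sinh\tau)^{N-1}|u||u'|\,d\tau\le C\,\|u\|_{L^{2}(\mathbb{H}^{N})}\|\nabla u\|_{L^{2}(\mathbb{H}^{N})},
\]
hence $|u(\rho)|\le C\|u\|_{H^{1}(\mathbb{H}^{N})}(\sinh\rho)^{-(N-1)/2}$. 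For $s\ge 1$ the continuous inclusion $H^{s}\subset H^{1}$ already delivers the analogous $H^{s}$-controlled bound; for $s\in(0,1)$ I would derive it from the Helgason--Fourier characterization of $H^{s}(\mathbb{H}^{N})$, which on radial functions reduces to a one-dimensional Jacobi transform, or by real interpolation between integer levels. Given a bounded sequence $\{u_{n}\}\subset H^{s}_{r}(\mathbb{H}^{N})$, the Rellich--Kondrachov theorem on the relatively compact balls $B_{k}\subset\mathbb{H}^{N}$ together with a diagonal argument extracts a subsequence converging in $L^{r}(B_{k})$ for every $k$, while the tails are controlled uniformly via
\[
\int_{d(x,0)>R}|u_{n}|^{r}\,dV_{\mathbb{H}^{N}}\le \Bigl(\sup_{\rho\ge R}|u_{n}(\rho)|\Bigr)^{r-2}\int_{\mathbb{H}^{N}}u_{n}^{2}\,dV_{\mathbb{H}^{N}}\le C\|u_{n}\|_{H^{s}}^{r}(\sinh R)^{-(N-1)(r-2)/2},
\]
which vanishes as $R\to\infty$ precisely because $r>2$. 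Local convergence plus uniformly vanishing tails makes $\{u_{n}\}$ Cauchy in $L^{r}(\mathbb{H}^{N})$.

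The main obstacle is extending the Strauss pointwise decay from $s=1$ to general $s>0$: the identity $u^{2}(\rho)=-2\int u\,u'\,d\tau$ rests on the classical derivative, so for fractional $s$ one must either work directly with the Helgason--Fourier transform, whose restriction to radial functions collapses to a one-dimensional weighted transform where the analogous integration-by-parts trick goes through, or argue by real interpolation between integer levels, being careful that compactness, not merely continuity, of the embedding is what gets interpolated. This is where the hyperbolic geometry enters nontrivially: the exponential volume growth of hyperbolic balls is what makes the pointwise decay yield vanishing $L^{r}$-tails for every $r>2$, a feature absent in the Euclidean setting where radial Strauss decay is only polynomial.
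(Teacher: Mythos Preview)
Your outline is sound for $s\ge 1$ and parallels the paper: both reduce to the $s=1$ result of Bhakta--Sandeep via the inclusion $H^{s}\subset H^{1}$ and control tails through the exponential weight $(\sinh\rho)^{N-1}$. One small difference: for exponents $r\ge\tfrac{2N}{N-2}$ the paper inserts a H\"older inequality between $L^{p}$ (with $p<\tfrac{2N}{N-2}$, where compactness is already in hand) and the endpoint $L^{2N/(N-2s)}$, whereas your tail bound $\int_{\rho>R}|u|^{r}\le\bigl(\sup_{\rho\ge R}|u|\bigr)^{r-2}\|u\|_{L^{2}}^{2}$ handles all $r>2$ directly without this extra step.

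The genuine gap is at $0<s\le\tfrac12$. Your tail estimate rests on the pointwise Strauss bound $|u(\rho)|\lesssim(\sinh\rho)^{-(N-1)/2}\|u\|_{H^{s}}$, but no such bound is available in that range: writing $v(t)=(\sinh t)^{(N-1)/2}u(t)$, the radial $H^{s}(\mathbb{H}^{N})$ norm controls only $\|v\|_{H^{s}(1,\infty)}$, and one-dimensional $H^{s}$ does not embed into $L^{\infty}$ when $s\le\tfrac12$. Neither the Jacobi-transform route nor interpolation of the Strauss inequality ``between integer levels'' can recover this, since there is no $L^{\infty}$-valued endpoint at $s=0$ to interpolate from. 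The paper sidesteps pointwise control altogether: it interpolates the linear map $u\mapsto(\sinh t)^{(N-1)/2}u(t)$ between $L^{2}_{r}(\mathbb{H}^{N})\to L^{2}(1,\infty)$ and $H^{1}_{r}(\mathbb{H}^{N})\to H^{1}(1,\infty)$ to get boundedness into $H^{s}(1,\infty)$, and then uses the one-dimensional embedding $H^{s}(1,\infty)\hookrightarrow L^{q}(1,\infty)$ (valid for $q<\tfrac{2}{1-2s}$, which dominates $\tfrac{2N}{N-2s}$ since $N\ge 2$) to show $\int_{R}^{\infty}(\sinh t)^{N-1}|u|^{r}\,dt\to 0$ uniformly on bounded sets of $H^{s}_{r}$. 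Your closing remark about interpolating the compact embedding directly via Lions--Peetre is a valid and arguably cleaner alternative; but it is a different argument from the one your main tail estimate is built on, so as written the proposal has this gap for small $s$.
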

\begin{proof}
From \cite{BS1}, we have that $H_{r}^{1}(\mathbb{H}^{N})\hookrightarrow L^{p}(\mathbb{H}^{N}), 2<p<\frac{2N}{N-2}$ is compact.

Case 1. For $s>1$.

From the definition of $H^{s}(\mathbb{H}^{N})$, we have
\[H_{r}^{s}(\mathbb{H}^{N})\hookrightarrow H_{r}^{1}(\mathbb{H}^{N}).\]
Hence,
\[H_{r}^{s}(\mathbb{H}^{N})\hookrightarrow L^{p}(\mathbb{H}^{N}),\,\, 2<p<\frac{2N}{N-2} \,\, is \,\, compact.\]
For $\frac{2N}{N-2}\leq \gamma<\frac{2N}{N-2s}$, we can deduce this Lemma using Sobolev inequalities and H\"{o}lder inequalities. Indeed, if $\frac{2N}{N-2}\leq r<\frac{2N}{N-2s}$, we get
\[\int_{\mathbb{H}^{N}}|u|^{\gamma}\ dV_{\mathbb{H}^{N}}\leq(\int_{\mathbb{H}^{N}}|u|^{\frac{2N}{N-2s}}\ dV_{\mathbb{H}^{N}})^{a}(\int_{\mathbb{H}^{N}}|u|^{p}\ dV_{\mathbb{H}^{N}})^{1-a},\]
where $a=\frac{2N-(N-2s)\gamma}{2N-p(N-2s)}$.

Case 2. For $0<s<1$.

From \cite{T1}, we have that
\[[L^{2}(\mathbb{H}^{N}),H^{1}(\mathbb{H}^{N})]_{s}=H^{s}(\mathbb{H}^{N}),\]
and
\[[L_{r}^{2}(\mathbb{H}^{N}),H_{r}^{1}(\mathbb{H}^{N})]_{s}=H_{r}^{s}(\mathbb{H}^{N}).\]

From section 2, setting $|\xi|=\tanh \frac{t}{2},\,\,u(t)=u(\tanh \frac{t}{2}),\,\,k(t)=(\sinh t)^{N-1}$, we have
\[\int_{\mathbb{H}^{N}}|u|^{q}\ dV_{\mathbb{H}^{N}}=w_{N-1}\int_{0}^{\infty}k(t)|u|^{q}dt,\]
\[\int_{\mathbb{H}^{N}}|\nabla_{\mathbb{H}^{N}}u|^{2}\ dV_{\mathbb{H}^{N}}=w_{N-1}\int_{0}^{\infty}k(t)|u'|^{2}dt.\]

Now we claim that:
\[\|(\sinh t)^{\frac{N-1}{2}}u(t)\|_{H^{1} (1,\infty)}\leq C\|u\|_{H^{1} (\mathbb{H}^{N})},\,\,\forall u\in H_{r}^{1}(\mathbb{H}^{N}).\]

To do this, let $v(t)=(\sinh t)^{\frac{N-1}{2}}u(t)$, we have
\[\begin{array}{ll}
|\frac{dv}{dt}|&=|\frac{N-1}{2}(\sinh t)^{\frac{N-3}{2}}(\cosh t) u(t)+(\sinh t)^{\frac{N-1}{2}}u'(t)|\\[2mm]
 &\leq C \frac{1}{\tanh t}|v|+(\sinh t)^{\frac{N-1}{2}}|\frac{du}{dt}|.
 \end{array}\]
It implies that
\[\begin{array}{ll}
\int_{1}^{\infty}|\frac{dv}{dt}|^{2}dt&\leq C(\int_{0}^{\infty}(\sinh t)^{N-1}u^{2}(t)dt+\int_{0}^{\infty}(\sinh t)^{N-1}|\frac{du}{dt}|^{2}dt)\\[2mm]
 &\leq C\|u\|_{H^{1}(\mathbb{H}^{N})}.
 \end{array}\]
For
\[\int_{0}^{\infty}(\sinh t)^{N-1}u^{2}(t)dt=w_{N-1}^{-1}\|u\|_{L^2(\mathbb{H}^{N})},\]
and by the interpolate theory, we get
\[\|(\sinh t)^{\frac{N-1}{2}}u(t)\|_{H^{s}(1,\infty)}\leq C\|u\|_{H_{r}^{s}(\mathbb{H}^{N})}.\]
Thus, we can easily deduce this Lemma by $\forall  q\in(2,\frac{2N}{N-2s})$,
\[\|u\|_{L^{q}(\mathbb{H}^{N})}\leq C\|u\|_{H^{s}(\mathbb{H}^{N})},\,\,\,\forall u\in H_{r}^{s}(\mathbb{H}^{N}),\]
\[\|(\sinh t)^{\frac{N-1}{2}}u(t)\|_{L^{q}(1,\infty)}\leq C\|u\|_{H^{s}(\mathbb{H}^{N})}.\]
Indeed, we have
\[\begin{array}{ll}
\int_{R}^{\infty}(\sinh t)^{N-1}|u(t)|^{q}dt&\leq\frac{1}{(\frac{e^{R}+e^{-R}}{2})^{(N-1)(\frac{q}{2}-1)}}\int_{R}^{\infty}(\sinh t)^{\frac{(N-1)q}{2}}|u(t)|^{q}dt\\[2mm]
 &\leq\frac{\mathcal{C}}{(\frac{e^{R}+e^{-R}}{2})^{(N-1)(\frac{q}{2}-1)}}\|u\|_{H^{s}(\mathbb{H}^{N})}.
 \end{array}\]
It implies that if $R$ is large enough,
\[w_{N-1}\int_{R}^{\infty}(\sinh t)^{N-1}|u(t)|^{q}dt\leq \varepsilon,\,\,\,\forall u\in H_{r}^{s}(\mathbb{H}^{N})\]

\end{proof}

Now, let $L_{r}^{2}(\mathbb{H}^{N})$ be the space of $L^{2}$-functions in $\mathbb{H}^{N}$ which are radially symmetric. Let $T=-\Delta_{\mathbb{H}^{N}}$ with the domain $D(T)=H_{r}^{2}(\mathbb{H}^{N})$ which is the space of radial symmetric functions that are in $L^{2}$ and have second derivatives in $L^{2}$. For $0\leq s\leq 2$, the space $E^{s}$, which is the domain $D(T^{\frac{s}{2}})$, is precisely the space obtained by interpolation between $H_{r}^{2}(\mathbb{H}^{N})$ and $L_{r}^{2}(\mathbb{H}^{N})$,
\[[H_{r}^{2}(\mathbb{H}^{N}),L_{r}^{2}(\mathbb{H}^{N})]_{1-\frac{s}{2}}.\]

In this case, the space $E^{s}$ is the usual Sobolev space $H_{r}^{s}(\mathbb{H}^{N})$. So denoting by $A=(-\Delta_{\mathbb{H}^N})^{\frac{1}{2}}$, we have for all $0\leq s\leq 2$,
\[D(A^s)=H_{r}^{s}(\mathbb{H}^{N}).\]

Let $E=H_{r}^{s}(\mathbb{H}^{N})\times H_{r}^{t}(\mathbb{H}^{N})$  and bilinear form: $B:E\times E\longrightarrow \mathbb{R}$ is define by
\[B[(u,v),(\Phi,\Psi)]=\int_{\mathbb{H}^{N}}A^{s}u A^{t}\Psi+A^{s}\Phi A^{t}v,\]
and the corresponding quadratic form by
\[Q(z)=\int_{\mathbb{H}^{N}}A^{s}u A^{t}v,\,\,\,(u,v)\in E.\]
This quadratic form with replace $\int_{\mathbb{H}^{N}}\nabla_{\mathbb{H}^{N}}u\cdot\nabla_{\mathbb{H}^{N}}v$, so we consider the functional $\Phi:E\longrightarrow \mathbb{R}$, defined by
\[Q(z)=\int_{\mathbb{H}^{N}}A^{s}u A^{t}v\ dV_{\mathbb{H}^{N}}-\frac{1}{p+1}\int_{\mathbb{H}^{N}}(v_+)^{p+1}\ dV_{\mathbb{H}^{N}}-\frac{1}{q+1}\int_{\mathbb{H}^{N}}(u_+)^{q+1}\ dV_{\mathbb{H}^{N}},\]
For $z=(u,v)\in E, \Phi$ is a $C^{1}$ functional and
\[\langle \Phi'(z),y \rangle_{E}=\int_{\mathbb{H}^{N}}A^{s}u A^{t}\Psi+A^{s}\Phi A^{t}v-\int_{\mathbb{H}^{N}}(v_+)^{p}\Phi-\int_{\mathbb{H}^{N}}(u_+)^{q}\Psi,\]
For $z=(u,v)\in E$ and $y=(\Phi,\Psi)\in E$. So the critical points of $\Phi$ satisfy the equations
\[\int_{\mathbb{H}^{N}}A^{s}u A^{t}\Psi-\int_{\mathbb{H}^{N}}(v_+)^{p}\Psi=0,\,\,\, for\,\, all\,\, \Psi\in H_{r}^{t}(\mathbb{H}^{N}),\]
and
\[\int_{\mathbb{H}^{N}}A^{s}\Phi A^{t}v-\int_{\mathbb{H}^{N}}(u_+)^{q}\Phi=0,\,\,\, for\,\, all\,\, \Phi\in H_{r}^{s}(\mathbb{H}^{N}).\]

Similar as \cite{FJ}, we have $E=E^-\oplus E^+$ and  $B[z^+,z^-]=0$ for $z^+\in E^+, z^-\in E^-$, where
\[E^-=\{(u, -A^{-t}A^su):u\in H^s_r(\mathbb{H}^N)\}, \quad E^+=\{(u, A^{-t}A^su):u\in H^s_r(\mathbb{H}^N)\}.\]
We also have $Q(z)=\frac{1}{2}B[z,z]$ and $\frac{1}{2}\|z\|_E^2=Q(z^+)-Q(z^-)$, where $z=(u, v)\in E$ and $z=z^++z^-, z^+\in E^+, z^-\in E^-$.

\textbf{Proof of Theorem \ref{tm:1.3}}

Thanks to $H_{r}^{s}(\mathbb{H}^{N})\hookrightarrow L^{q}(\mathbb{H}^{N}),\,\,2<q<\frac{2N}{N-2s}$ is compact, then $\Phi:E\longrightarrow \mathbb{R}$ satisfies the Palais-Smale conditions as \cite{FJ} and using the linking theorem similarly as \cite{FJ}, we can get a radial solutions of problem (\ref{eq:1.1}).

\section  {Existence of Ground state solutions}

By Theorem  \ref{tm:1.3} , we know that the set
\[\{(u,v)\in E=H^{1}(\mathbb{H}^{N})\times H^{1}(\mathbb{H}^{N}): (u,v)\ \ is\ \ a\ \ nontrivial\ \ solution\ \ of\ \ (\ref{eq:1.1})\},\]
is non-empty set. We defined
\[I^{\infty}=\inf\{I(u,v)|(u,v)\ \ is\ \ a\ \ nontrivial\ \ solution\ \ of\ \ (\ref{eq:1.1})\}.\]
In order to give the proof of Theorem \ref{tm:1.4}, let us define
\begin{definition}
For $r>0$, define $S_r=\{x\in \mathbb{R}^N:|x|^2=1+r^2\}$ and for $a\in S_r$ define
\[A(a, r)=B(a, r)\cap \mathbb{H}^N\]
where $B(a, r)$ is the open ball in the Euclidean space with center $a$ and radius $r>0$.  Moreover, for the choice of $a$ and $r$,
 $\partial B(a, r)$ is orthogonal to $S^{N-1}$.
\end{definition}
Similarly as \cite{BS1}, we have
\begin{lemma}\label{lm:3.5}
Let $r_1>0, r_2>0$ and $A(a_i, r_i), i=1,2$ be as in the above definition, then there exists $\tau\in \mathcal {I}(\mathbb{H}^N)$
 such that $\tau(A(a_1, r_1))=A(a_2, r_2)$, where $\mathcal {I}(\mathbb{H}^N)$ is the isometry group of $\mathbb{H}^N$.
\end{lemma}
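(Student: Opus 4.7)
The plan is to recognize $A(a,r)$ as a hyperbolic half-space in the ball model of $\mathbb{H}^N$ and then to invoke transitivity of the isometry group on such objects.

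First I would verify that $P(a,r):=\partial B(a,r)\cap\mathbb{B}^N$ is a totally geodesic hyperplane of $\mathbb{H}^N$. The orthogonality condition $\partial B(a,r)\perp S^{N-1}$ is precisely the Pythagorean identity $|a|^{2}=1+r^{2}$. It is a classical fact in the ball model that the codimension-one totally geodesic submanifolds are exactly the intersections of $\mathbb{B}^{N}$ with either hyperplanes through the origin or Euclidean spheres meeting $S^{N-1}$ orthogonally. Hence $P(a,r)$ is totally geodesic, and $A(a,r)$ is one of the two open hyperbolic half-spaces it bounds.

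Next I would invoke the transitivity of $\mathcal{I}(\mathbb{H}^{N})$ on pairs (totally geodesic hyperplane, choice of half-space). For an explicit construction of $\tau$, I would decompose it as a product of two standard isometries of $\mathbb{B}^{N}$. First, a Euclidean rotation $R\in O(N)\subset \mathcal{I}(\mathbb{H}^{N})$ sending $a_{1}/|a_{1}|$ to $a_{2}/|a_{2}|$ reduces the problem to the case where both centres lie on a common ray $\{te:t>0\}$ with $e\in S^{N-1}$ fixed. Second, along the geodesic $\{te:-1<t<1\}$ there is a one-parameter family of hyperbolic translations, explicit Möbius automorphisms of $\mathbb{B}^{N}$, which preserve that diameter; I would apply one of these to move the boundary hyperplane to the desired position while preserving the chosen side.

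The only nontrivial step is to check that these hyperbolic translations act transitively on half-spaces of the form $A(a,r)$ with $a$ on the chosen diameter and $0$ on the far side. A short ball-model computation, using that the radial segment from $0$ meets $\partial B(a,r)$ perpendicularly, gives
\[
\mathrm{dist}_{\mathbb{H}^{N}}\bigl(0,P(a,r)\bigr)=\log\frac{1+(|a|-r)}{1-(|a|-r)},\qquad |a|=\sqrt{1+r^{2}},
\]
which is a continuous, strictly monotone map of $r\in(0,\infty)$ onto $(0,\infty)$. Since hyperbolic translations along a fixed diameter form a one-parameter subgroup acting transitively on signed distances to $0$ along that diameter, the translation parameter can be chosen to match the two distances, and the resulting isometry sends $P(a_{1},r_{1})$ to $P(a_{2},r_{2})$ together with the correct side. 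Everything else in the argument is a direct quotation of standard facts about the ball model and its isometry group.
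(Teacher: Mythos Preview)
Your argument is correct. The paper itself does not supply a proof of this lemma; it simply records the statement with the remark ``Similarly as \cite{BS1}, we have'' and cites Bhakta--Sandeep. What you have written is exactly the standard justification that presumably lies behind that citation: the orthogonality condition $|a|^{2}=1+r^{2}$ forces $\partial B(a,r)\cap\mathbb{B}^{N}$ to be a totally geodesic hyperplane in the ball model, so each $A(a,r)$ is a hyperbolic half-space, and the isometry group acts transitively on half-spaces. Your explicit decomposition into an $O(N)$-rotation followed by a hyperbolic translation along the common diameter, together with the verification that $r\mapsto\sqrt{1+r^{2}}-r$ is a bijection of $(0,\infty)$ onto $(0,1)$ (hence the foot-of-perpendicular distances sweep out all of $(0,\infty)$), is a clean way to make the transitivity concrete, and the observation that both caps lie on the $e$-side (since $0\notin B(a,r)$) ensures the correct half-space is selected. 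Nothing is missing; if anything, you have provided more detail than the paper does.
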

\begin{lemma}\label{lm:5.1}
If $p,q<\frac{N+2}{N-2}$, then $I^{\infty}$ is attained and $I^{\infty}>0$.
\end{lemma}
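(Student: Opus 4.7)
The plan is to prove the two claims separately, both resting on the Nehari-type identity that any solution of (\ref{eq:1.1}) satisfies. Testing the equations against $v$ and $u$ respectively gives
\[\int_{\mathbb{H}^N} \nabla u \cdot \nabla v\, dV_{\mathbb{H}^N} = \int_{\mathbb{H}^N} v^{p+1}\, dV_{\mathbb{H}^N} = \int_{\mathbb{H}^N} u^{q+1}\, dV_{\mathbb{H}^N},\]
so on a nontrivial positive solution $I(u,v) = \gamma \int v^{p+1}\, dV_{\mathbb{H}^N}$ with $\gamma = 1 - \frac{1}{p+1} - \frac{1}{q+1} > 0$ (using $pq>1$). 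To bound $\int v^{p+1}$ from below, test $-\Delta u = v^p$ against $u$ to get $\|\nabla u\|_2^2 \leq \|v\|_{p+1}^p \|u\|_{p+1}$; the Sobolev embedding $\|u\|_{p+1} \leq C\|u\|_{H^1}$ combined with the spectral gap $\|u\|_2 \leq \tfrac{2}{N-1}\|\nabla u\|_2$ on $\mathbb{H}^N$ then yields $\|\nabla u\|_2 \leq C\|v\|_{p+1}^p$, and symmetrically $\|\nabla v\|_2 \leq C\|u\|_{q+1}^q$. Cauchy--Schwarz on $\int v^{p+1} = \int \nabla u \cdot \nabla v \leq \|\nabla u\|_2\|\nabla v\|_2$ together with $\int v^{p+1}=\int u^{q+1}$ produces a uniform lower bound $\int v^{p+1} \geq c_0 > 0$ on the set of nontrivial solutions, so $I^\infty \geq \gamma c_0 > 0$.

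For attainment, pick a minimizing sequence $(u_n,v_n)$ of nontrivial solutions. The identity $I(u_n,v_n) = \gamma \int v_n^{p+1}$ and the Sobolev chain above bound $(u_n,v_n)$ in $H^1(\mathbb{H}^N)\times H^1(\mathbb{H}^N)$. To prevent loss of mass at infinity, invoke a Lions-type non-vanishing alternative on $\mathbb{H}^N$: either $v_n\to 0$ in $L^{p+1}$, contradicting the uniform lower bound, or there exist $r>0$, $\delta>0$, and admissible centers $a_n$ such that $\int_{A(a_n,r)} v_n^{p+1}\, dV_{\mathbb{H}^N} \geq \delta$. Using Lemma \ref{lm:3.5}, pick isometries $\tau_n$ mapping $A(a_n,r)$ onto a fixed region $A(a_0,r)$ and set $(\tilde u_n,\tilde v_n) = (u_n\circ\tau_n, v_n\circ\tau_n)$; isometry invariance of (\ref{eq:1.1}) and of $I$ makes this another minimizing sequence of solutions, now with mass anchored near $a_0$. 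Up to a subsequence, $(\tilde u_n,\tilde v_n)\rightharpoonup (\tilde u,\tilde v)$ in $H^1\times H^1$ and strongly in $L^{p+1}_{\mathrm{loc}}\cap L^{q+1}_{\mathrm{loc}}$ by local Rellich (valid because $p,q<(N+2)/(N-2)$).

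Strong local convergence lets us pass to the limit in the weak formulation of (\ref{eq:1.1}) against test functions in $C_c^\infty(\mathbb{H}^N)$, so $(\tilde u,\tilde v)$ is a weak solution; the concentration $\int_{A(a_0,r)}\tilde v^{p+1} \geq \delta$ forces $\tilde v\not\equiv 0$, and then $-\Delta \tilde u = \tilde v^p\not\equiv 0$ forces $\tilde u\not\equiv 0$. Fatou's lemma gives
\[I(\tilde u,\tilde v) = \gamma\int \tilde v^{p+1}\, dV_{\mathbb{H}^N} \leq \gamma\liminf_n \int \tilde v_n^{p+1}\, dV_{\mathbb{H}^N} = \liminf_n I(\tilde u_n,\tilde v_n) = I^\infty,\]
which, combined with $I(\tilde u,\tilde v)\geq I^\infty$, shows the infimum is attained. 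The main obstacle is the loss of compactness from the transitive isometry action on $\mathbb{H}^N$, together with the fact that the bilinear cross term $\int \nabla u\cdot\nabla v$ in $I$ is not weakly lower semicontinuous; the rewriting $I = \gamma\int v^{p+1}$ on the solution set sidesteps the second issue, and a $\mathbb{H}^N$-adapted non-vanishing lemma combined with Lemma \ref{lm:3.5} handles the first by converting the concentration alternative into a genuinely nontrivial weak limit.
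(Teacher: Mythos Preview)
Your overall strategy coincides with the paper's: rewrite $I$ on solutions via the Nehari identity as a positive multiple of $\int (v_+)^{p+1}$, derive a uniform lower bound on this quantity over nontrivial solutions (hence $I^\infty>0$), bound a minimizing sequence in $H^1\times H^1$, restore compactness by translating with isometries, pass to a weak limit that is still a solution, and conclude by Fatou. The lower-bound chain and the Fatou step are correct.

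There is, however, a real gap in the non-vanishing step. The regions $A(a,r)$ of Lemma~\ref{lm:3.5} are not geodesic balls: since $\partial B(a,r)$ meets $\partial\mathbb{B}^N$ orthogonally, each $A(a,r)$ is a hyperbolic half-space and hence unbounded in $\mathbb{H}^N$. So after translation the inequality $\int_{A(a_0,r)}\tilde v_n^{\,p+1}\,dV_{\mathbb{H}^N}\geq \delta$ together with $\tilde v_n\to\tilde v$ in $L^{p+1}_{\mathrm{loc}}$ does \emph{not} yield $\int_{A(a_0,r)}\tilde v^{\,p+1}\,dV_{\mathbb{H}^N}\geq\delta$; mass can still escape to infinity within $A(a_0,r)$, and your conclusion $\tilde v\not\equiv 0$ is unjustified as written. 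The paper supplies exactly the missing idea here: it fixes the concentration level \emph{equal} to small numbers $\delta_1,\delta_2$ chosen below a threshold determined by the Sobolev constant $S_{N,p}$, translates so that $\delta_2=\sup_{x\in S_{\sqrt3}}\int_{A(x,\sqrt3)}[(u_n')_+]^{q+1}$, and then tests the equations against $\Phi^2(u_n')_+$ with $\Phi\in C_c^\infty(A(a,\sqrt3))$ to show that, were the weak limit zero, the $L^{q+1}$ and $L^{p+1}$ masses outside any compact set would vanish---contradicting the retained mass $\delta_2$. This cutoff/Sobolev-threshold argument is what your sketch omits. A workable alternative, if you prefer the Lions formulation, is to run the non-vanishing dichotomy with genuine geodesic balls $B_{\mathbb{H}^N}(y_n,R)$ (which are compact) and translate via the transitive isometry action directly, bypassing Lemma~\ref{lm:3.5}; then local Rellich really does pin down a nonzero weak limit as you intended.
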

\begin{proof}
By Theorem  \ref{tm:1.3} , there exists a positive solution of (\ref{eq:1.1}), so
$\{z\in E:I'(z)=0,z\neq 0\}\neq \emptyset$, and $I^{\infty}$ is finite. If $z=(u,v)$  is a solution of (\ref{eq:1.1}), then
\[2\int_{\mathbb{H^{N}}}\nabla_{\mathbb{H^{N}}} u\cdot\nabla_{\mathbb{H^{N}}} v\ dV_{\mathbb{H}^{N}}=\int_{\mathbb{H^{N}}}(u_+)^{q+1}+(v_+)^{p+1}\ dV_{\mathbb{H}^{N}},\]
and hence
\[\begin{array}{ll}
I(z)&=I(u,v)\\[2mm]
  &=\int_{\mathbb{H^{N}}}\nabla_{\mathbb{H^{N}}} u\cdot\nabla_{\mathbb{H^{N}}} v\ \ dV_{\mathbb{H}^{N}}-\frac{1}{p+1}\int_{\mathbb{H^{N}}}(v_+)^{p+1}\ dV_{\mathbb{H}^{N}}-\frac{1}{q+1}\int_{\mathbb{H^{N}}}(u_+)^{q+1}\ dV_{\mathbb{H}^{N}}\\[2mm]
  &=(\frac{1}{2}-\frac{1}{p+1})\int_{\mathbb{H^{N}}}(v_+)^{p+1}\ dV_{\mathbb{H}^{N}}+(\frac{1}{2}-\frac{1}{q+1})\int_{\mathbb{H^{N}}}(u_+)^{q+1}\ dV_{\mathbb{H}^{N}}\\[2mm]
  &\geq 0.
 \end{array}\]

Now, we show that $I^{\infty}$ is obtained and positive.

Step 1. We show that the set of non-trivial solutions is bounded from below. In fact, we have
\begin{equation}\label{eq:5.1}
\begin{array}{ll}
\|(u)_+\|_{H^{1}(\mathbb{H}^{N})}^{2}&=\int_{\mathbb{H}^{N}}(v_+)^p u_+\ dV_{\mathbb{H}^{N}}\\[2mm]
&\leq (\int_{\mathbb{H}^{N}}(v_+)^{p+1}\ dV_{\mathbb{H}^{N}})^{\frac{p}{p+1}}(\int_{\mathbb{H}^{N}}(u_+)^{p+1}\ dV_{\mathbb{H}^{N}})^{\frac{1}{p+1}}\\[2mm]
&\leq C \|v_+\|_{L^{p+1}(\mathbb{H}^{N})}^{p}\|u_+\|_{H^{1}(\mathbb{H}^{N})}.
\end{array}
 \end{equation}
It implies that
\[\|u_+\|_{H^{1}(\mathbb{H}^{N})}\leq C\|v_+\|^{p}_{L^{p+1}(\mathbb{H}^{N})}.\]

Now, using the two equations, we obtain
\begin{equation}\label{eq:5.2}
\int_{\mathbb{H}^{N}}(v_+)^{p+1}\ dV_{\mathbb{H}^{N}}=\int_{\mathbb{H^{N}}}\nabla_{\mathbb{H^{N}}}u\cdot\nabla_{\mathbb{H^{N}}} v\ dV_{\mathbb{H}^{N}}=\int_{\mathbb{H}^{N}}(u_+)^{q+1}\ dV_{\mathbb{H}^{N}}.
\end{equation}
From (\ref{eq:5.1}),(\ref{eq:5.2}), it follows that
\[\|u_+\|_{L^{q+1}(\mathbb{H}^{N})}\leq\|u_+\|_{H^{1}(\mathbb{H}^{N})}\leq C\|v_+\|_{L^{p+1}(\mathbb{H}^{N})}^{p}\leq C\|u_+\|_{L^{q+1}(\mathbb{H}^{N})}^{\frac{p(q+1)}{p+1}}.\]
Thus we get
\[\|u_+\|_{L^{q+1}(\mathbb{H}^{N})}\leq C\|u_+\|_{L^{q+1}(\mathbb{H}^{N})}^{\frac{p(q+1)}{p+1}}.\]
Which implies $\|u_+\|_{L^{q+1}(\mathbb{H}^{N})}\geq C$, since $\frac{p(q+1)}{p+1}>1$.

Similarly, we prove that $\|v_+\|_{L^{p+1}(\mathbb{H}^{N})}\geq C$.

Step 2. Suppose now that $z_{n}=(u_{n},v_{n})$ is a minimizing sequence of $I^{\infty}$, that is
\[I(z_{n})\longrightarrow I^{\infty},\,\,I'(z_{n})=0,\,\,z_{n}\neq 0.\]

Clearly, $\{z_{n}\}$ is a $(PS)_{I^{\infty}}$ sequence for $I$. Now, we want to prove that $\{z_{n}\}$ is uniformly bounded in $E$. Indeed, on one hand, we have
\[(\frac{1}{2}-\frac{1}{p+1})\int_{\mathbb{H}^{N}}[(v_{n})_+]^{p+1}\ dV_{\mathbb{H}^{N}}+(\frac{1}{2}-\frac{1}{q+1})
\int_{\mathbb{H}^{N}}[(u_{n})_+]^{q+1}\ dV_{\mathbb{H}^{N}}\leq I^{\infty},\]
it implies that
\[\int_{\mathbb{H}^{N}}[(v_{n})_+]^{p+1}\ dV_{\mathbb{H}^{N}}+\int_{\mathbb{H}^{N}}[(u_{n})_+]^{q+1}\ dV_{\mathbb{H}^{N}}\leq C.\]
On the other hand,
\[\begin{array}{ll}
0&=\langle I'(z_{n}),z_{n}^{+} \rangle\\[2mm]
&=\|z_{n}^{+}\|^{2}-\int_{\mathbb{H}^{N}}[(v_{n})_+]^{p} v_{n}^{+}\ dV_{\mathbb{H}^{N}}-\int_{\mathbb{H}^{N}}[(u_{n})_+]^{q}u_{n}^{+}\ dV_{\mathbb{H}^{N}}\\[2mm]
&\geq\|z_{n}^{+}\|^{2}-(\int_{\mathbb{H}^{N}}[(v_{n})_+]^{p+1}\ dV_{\mathbb{H}^{N}})^{\frac{p}{p+1}}(\int_{\mathbb{H}^{N}}|v_{n}^{+}|^{p+1}\ dV_{\mathbb{H}^{N}})^{\frac{1}{p+1}}\\[2mm]
&\ \ \
  -(\int_{\mathbb{H}^{N}}[(u_{n})_+]^{q+1}\ dV_{\mathbb{H}^{N}})^{\frac{q}{q+1}}(\int_{\mathbb{H}^{N}}|u_{n}^{+}|^{q+1}\ dV_{\mathbb{H}^{N}})^{\frac{1}{q+1}}\\[2mm]
&\geq\|z_{n}^{+}\|^{2}-(\int_{\mathbb{H}^{N}}[(v_{n})_+]^{p+1}\ dV_{\mathbb{H}^{N}})^{\frac{p}{p+1}}\|z_{n}^{+}\|\\[2mm]
&\ \ \
  -(\int_{\mathbb{H}^{N}}[(u_{n})_+]^{q+1}\ dV_{\mathbb{H}^{N}})^{\frac{q}{q+1}}\|z_{n}^{+}\|.
\end{array}\]
Which implies that $\|z_{n}^{+}\|\leq C$, similarly  $\|z_{n}^{-}\|\leq C$, thus $\|z_{n}\|=\|z_{n}^{+}\|+\|z_{n}^{-}\|\leq C$.

Hence, we may assume
\[z_{n}\longrightarrow z=(u,v)\,\,in \,\,E,\,\,\,z_{n}\longrightarrow z \,\,\,in \,\,\,L_{loc}^{q}(\mathbb{H}^{N})\times L_{loc}^{q}(\mathbb{H}^{N}),\]
as $n\longrightarrow\infty$ for any $2\leq q<\frac{2N}{N-2}$.

Step 3. By step 1, we know that, there exists $\delta'>0$ such that
\[\liminf\limits_{n\longrightarrow\infty}\int_{\mathbb{H}^{N}}[(u_{n})_+]^{q+1}\ dV_{\mathbb{H}^{N}}=\liminf\limits_{n\longrightarrow \infty}\int_{\mathbb{H}^{N}}[(v_{n})_+]^{p+1}\ dV_{\mathbb{H}^{N}}>\delta'>0.\]

Let us fix $\delta>0$, such that $0<\delta_{1}<\delta'<S_{N,p}^{\frac{p+1}{p-1}},\,\,0<\delta_{2}<\delta'<S_{N,p}^{\frac{q+1}{q-1}}$ , where $S_{N,P}$
satisfies that
\[S_{N,p}(\int_{\mathbb{H}^{N}}|u|^{p+1}\ dV_{\mathbb{H}^{N}})^\frac{2}{p+1}
\leq \int_{\mathbb{H}^{N}}|\nabla_{\mathbb{H}^N} u|^2\ \ dV_{\mathbb{H}^{N}},
\, \forall u\in H^1(\mathbb{H}^N), 1<p<\frac{N+2}{N-2}, N\geq 3.\]
 Let us define the concentration function:$Q_{n}:(0,\infty)\longrightarrow \mathbb{R}$ as follows.
\[Q_{n}(r)=\sup\limits_{x\in S_{r}}\int_{A(x,r)}[(u_{n})_+]^{q+1}\ dV_{\mathbb{H}^{N}}.\]

Now, $\lim\limits_{r\longrightarrow 0}Q_{n}(r)=0$, and $\lim\limits_{r\longrightarrow \infty}Q_{n}(r)>\delta_{2}$ as for large $r$. $A(x,r)$ approximates the intersection of $\mathbb{H}^{N}$ with a half space $\{y\in \mathbb{R}^{N}:(y,x)>0\}$. Therefore, we can choose a sequence $R_{n}>0$ and $x_{n}\in S_{R_{n}}$ such that
\[\sup\limits_{x\in S_{R_{n}}}\int_{A(x,R_{n})}[(u_{n})_+]^{q+1}\ dV_{\mathbb{H}^{N}}=\int_{A(x_{n},R_{n})}[(u_{n})_+]^{q+1}\ dV_{\mathbb{H}^{N}}=\delta_{2}.\]
For $x_{0}\in S_{\sqrt{3}}$, and using Lemma   , choosing $T_{n}\in I(\mathbb{H}^{N})$ such that
\[A(x_{n},R_{n})=T_{n}(A(x_{0},\sqrt{3})).\]

Now define $z_{n}'=(u_{n}',v_{n}')=(u_n\circ T_{n}(x),v_n\circ T_{n}(x))$. Since $T_{n}$ is an isometry one can easily see that $\{(u_{n}',v_{n}')\}$ is a $(PS)_{c}$ sequence of $I$ at the same level $I^{\infty}$ as $(u_{n},v_{n})$, and
\[\int_{\mathbb{H}^{N}}[(u_{n}')_+]^{q+1}\ dV_{\mathbb{H}^{N}}=\int_{\mathbb{H}^{N}}[(v_{n}')_+]^{p+1}\ dV_{\mathbb{H}^{N}}.\]
We also have
\begin{equation}\label{eq:5.3}
\begin{array}{ll}
\int_{A(x_{0},\sqrt{3})}[(u_{n}')_+]^{q+1}\ dV_{\mathbb{H}^{N}}=\int_{A(x_{n},R_{n})}[(u_{n}')_+]^{q+1}\ dV_{\mathbb{H}^{N}}=\sup\limits_{x\in S_{\sqrt{3}}}\int_{A(x,\sqrt{3})}[(u_{n}')_+]^{q+1}\ dV_{\mathbb{H}^{N}}=\delta_{2}.\\
\int_{A(x_{0},\sqrt{3})}[(v_{n}')_+]^{p+1}\ dV_{\mathbb{H}^{N}}=\int_{A(x_{n},R_{n})}|[(v_{n}')_+]^{p+1}\ dV_{\mathbb{H}^{N}}\leq \sup\limits_{x\in S_{r}}\int_{A(x,\sqrt{3})}[(v_{n}')_+]^{p+1}\ dV_{\mathbb{H}^{N}}=\delta_{1}.
 \end{array}
 \end{equation}
and $\|z_{n}\|=\|z_{n}'\|$. Therefore, up to a subsequence, we may assume
\[z_{n}'\longrightarrow z'=(u',v'),\,\,\,in  \  H^{1}(\mathbb{H}^{N})\times H^{1}(\mathbb{H}^{N}),\]
\[z_{n}'\longrightarrow z'=(u',v'),\,\,\,in  \  L_{loc}^{q}(\mathbb{H}^{N})\times L_{loc}^{q}(\mathbb{H}^{N}),\,\,2\leq q<\frac{N+2}{N-2}.\]
Moreover $z'=(u',v')$ solves the equation (\ref{eq:1.1}).

Now, we want to prove that $z'\neq 0$. If it is not true, we have $z'=(u',v')=(0,0)$. We claim that:

Claim: For any $1>r>2-\sqrt{3}$,
\[\int_{\mathbb{H}^{N}\bigcap\{|x|\geq r\}}[(u_{n}')_+]^{q+1}\ dV_{\mathbb{H}^{N}}=o(1),\,\,\,
\int_{\mathbb{H}^{N}\bigcap\{|x|\geq r\}}[(v_{n}')_+]^{p+1}\ dV_{\mathbb{H}^{N}}=o(1).\]

To do this, let us fix a point $a\in S_{\sqrt{3}}$. Let $\Phi\in C_{0}^{\infty}(A(a,\sqrt{3}))$ such that $0\leq\Phi\leq 1$, where $A(a,\sqrt{3})=B(a,\sqrt{3})\bigcap\mathbb{H}^{N}$, where $B(a,\sqrt{3})$ is the Euclidean ball with center $a$ and radius $\sqrt{3}$.
\[\int_{\mathbb{H}^{N}}\nabla_{\mathbb{H}^{N}} u_{n}'\nabla_{\mathbb{H}^{N}} \Psi \ dV_{\mathbb{H}^{N}}=\int_{\mathbb{H}^{N}}[(v_{n}')_+]^{p} \Psi \ dV_{\mathbb{H}^{N}},\,\,\,for\,\,  \Psi\in H^{1}(\mathbb{H}^{N}).\]

Now putting $\Psi=\Phi^{2}(u_{n}')_+$, in the above identity, we get
\[\int_{\mathbb{H}^{N}}\nabla_{\mathbb{H}^{N}} u_{n}'\nabla_{\mathbb{H}^{N}} [\Phi^{2}(u_{n}')_+] \ dV_{\mathbb{H}^{N}}=\int_{\mathbb{H}^{N}}[(v_{n}')_+]^{p} \Phi^{2}( u_{n}')_+ \ dV_{\mathbb{H}^{N}}.\]

A simple computation gives
\begin{equation}\label{eq:5.4}
\int_{\mathbb{H}^{N}}|\nabla_{\mathbb{H}^{N}} [\Phi (u_{n}')_+]|^{2} \ dV_{\mathbb{H}^{N}}=\int_{\mathbb{H}^{N}}[(v_{n}')_+]^{p} \Phi^{2} (u_{n}')_+ \ dV_{\mathbb{H}^{N}}+o(1).
 \end{equation}

Now using (\ref{eq:5.4}), H\"{o}lder inequality and Poincar\`{e}-Sobolev inequality, we get
\[\begin{array}{ll}
&\ \  \|\Phi (u_{n}')_+\|_{H^{1}(\mathbb{H}^{N})}^{2}\\[2mm]
&\leq(\int_{\mathbb{H}^{N}}[\Phi (v_{n}')_+]^{p+1}\ dV_{\mathbb{H}^{N}})^{p+1}(\int_{\mathbb{H}^{N}}[\Phi (u_{n}')_+]^{p+1}\ dV_{\mathbb{H}^{N}})^{p+1}(\int_{A(a,\sqrt{3})}[(v_{n}')_+]^{p+1}\ dV_{\mathbb{H}^{N}})^{\frac{p-1}{p+1}}\\[2mm]
&\leq S_{N,p}^{-2}\|\Phi (v_{n}')_+\|_{H^{1}(\mathbb{H}^{N})}\|\Phi (u_{n}')_+\|_{H^{1}(\mathbb{H}^{N})}(\int_{A(a,\sqrt{3})}[(v_{n}')_+]^{p+1}\ dV_{\mathbb{H}^{N}})^{\frac{p-1}{p+1}}.
\end{array}\]
Similarly,
\[\begin{array}{ll}
&\ \  \|\Phi (v_{n}')_+\|_{H^{1}(\mathbb{H}^{N})}^{2}\\[2mm]
&\leq S_{N,p}^{-2}\|\Phi (v_{n}')_+\|_{H^{1}(\mathbb{H}^{N})}\|\Phi (u_{n}')_+\|_{H^{1}(\mathbb{H}^{N})}(\int_{A(a,\sqrt{3})}[(u_{n}')_+]^{q+1}\ dV_{\mathbb{H}^{N}})^{\frac{q-1}{q+1}}.
\end{array}\]

Now, if $\|\Phi (v_{n}')_+\|_{H^{1}(\mathbb{H}^{N})} \not\rightarrow 0$ and $\|\Phi (u_{n}')_+\|_{H^{1}(\mathbb{H}^{N})}\not\rightarrow 0$ as $n\longrightarrow \infty$. We get
\[\begin{array}{ll} S_{N,p}^{2}&\leq(\int_{A(a,\sqrt{3})}[(v_{n}')_+]^{p+1}\ dV_{\mathbb{H}^{N}})^{\frac{p-1}{p+1}}(\int_{A(a,\sqrt{3})}[(u_{n}')_+]^{q+1}\ dV_{\mathbb{H}^{N}})^{\frac{q-1}{q+1}}\\[2mm]
&<\delta_{1}^{\frac{p-1}{p+1}}\delta_{2}^{\frac{q-1}{q+1}}\\[2mm]
&<S_{N,p}^{2}.
\end{array}\]
Which is a contradiction. This implies that $\int_{\mathbb{H}^{N}}[\Phi (v_{n}')_+]^{p+1}\ dV_{\mathbb{H}^{N}}\rightarrow 0$  and $\int_{\mathbb{H}^{N}}[\Phi (u_{n}')_+]^{q+1}\ dV_{\mathbb{H}^{N}}\rightarrow 0$. Since $a\in S_{\sqrt{3}}$ is arbitrary, the claim follows.

If $1<p,q<\frac{N+2}{N-2}$, this together with the fact that
\[z_{n}'=(u_{n}',v_{n}')\longrightarrow(0,0),\,\,\,in \,\,L_{loc}^{q}\times L_{loc}^{q}(\mathbb{H}^{N}),\,\,\,2\leq q<\frac{N+2}{N-2},\]
immediately gives a contradiction to (\ref{eq:5.3}). Which implies that $z'=(u',v')\neq(0,0)$.

Thus, we have
\[\begin{array}{ll}
I^{\infty}
&=\lim\limits_{n\longrightarrow \infty}I(z_{n}')\\[2mm]
&\geq\liminf\limits_{n\longrightarrow \infty}(\frac{1}{2}-\frac{1}{p+1})\int_{\mathbb{H}^{N}}[(v_{n}')_+]^{p+1}\ dV_{\mathbb{H}^{N}}+(\frac{1}{2}-\frac{1}{q+1})\int_{\mathbb{H}^{N}}[(u_{n}')_+]^{q+1}\ dV_{\mathbb{H}^{N}}\\[2mm]
&\geq I(z').
\end{array}\]
Consequently $I(z')$ is obtained.

To show that $I^{\infty}>0$, we notice only that if $z'=(u',v')\neq(0,0)$ with $I(z')\neq 0$, then
\[I(z')=(\frac{1}{2}-\frac{1}{p+1})\int_{\mathbb{H}^{N}}[(v')_+]^{p+1}\ dV_{\mathbb{H}^{N}}+(\frac{1}{2}-\frac{1}{q+1})\int_{\mathbb{H}^{N}}[(u')_+]^{q+1}\ dV_{\mathbb{H}^{N}}>0.\]

\textbf{Proof of Theorem \ref{tm:1.4}} By Lemma  \ref{lm:5.1}, we know that there exists a nontrivial solution $z=(u,v)$  of problem (\ref{eq:1.1}) with $I(z)=I^\infty$. Moreover, we have $z=(u, v)>(0,0)$. Otherwise, we may assume that $u$ changes sign, $u=u_+-u_-, u_+\not\equiv 0, u_-\not\equiv 0$ satisfying
\[-\int_{\mathbb{H}^{N}}|\nabla_{\mathbb{H}^N} u_-|\ dV_{\mathbb{H}^N}=\int_{\mathbb{H}^{N}} (v_+)^p u_-\ dV_{\mathbb{H}^N}\geq 0. \]
So we have that $u_-=0$, that is $u\geq 0$. Similarly, we have $v\geq 0$.

\end{proof}

\end{document}